\newcommand{\twocols}{\begin{multicols}{2}}
\newcommand{\onecol}{\end{multicols}}
\newcommand{\threecols}{\begin{multicols}{3}}
\newcommand{\fourcols}{\begin{multicols}{4}}
\newcommand{\ZZ}{\mathbb{Z}}
\newcommand{\QQ}{\mathbb{Q}}
\newtheorem{Theorem}{Theorem}
\newtheorem{Program}[Theorem]{Program} 
\newtheorem{Corollary}[Theorem]{Corollary} 
\newtheorem{Lemma}[Theorem]{Lemma} \newtheorem{Proposition}[Theorem]{Proposition} \theoremstyle{definition}
  \newtheorem{Example}[Theorem]{Example}  \newtheorem{Remark}[Theorem]{Remark}  \newtheorem{Question}[Theorem]{Question}  
   \numberwithin{equation}{section}   
 \newtheorem*{Conjecture*}{Main Conjectures}
\DeclareMathOperator{\codim}{codim}
\def\<{\langle}
\def\>{\rangle}
\title{Arithmetic in the Boij-S\"oderberg Cone}
\author{Adam Boocher}
\address{Department of Mathematics, University of San Diego, San Diego, CA 92110, USA}
\email{aboocher@sandiego.edu}
\author{Noah Huang}
\address{Department of Mathematics, University of San Diego, San Diego, CA 92110, USA}
\email{nhuang@sandiego.edu}
\author{Harrison Wolf}
\address{Department of Mathematics, University of San Diego, San Diego, CA 92110, USA}
\email{harrisonwolf@sandiego.edu}
\begin{document}
\maketitle 
\vspace{-2.5em}
\begin{abstract}
We study two long-standing conjectures concerning lower bounds for the betti numbers of a graded module over a polynomial ring.  We prove new cases of these conjectures in codimensions five and six by reframing the conjectures as arithmetic problems in the Boij-S\"oderberg cone.  In this setting, potential counterexamples correspond to explicit Diophantine obstructions arising from the numerics of pure resolutions.  Using number-theoretic methods, we completely classify these obstructions in the codimension three case revealing some delicate connections between betti tables, commutative algebra and classical Diophantine equations.  The new results in codimensions five and six concern Gorenstein algebras where a study of the variety determined by these Diophantine equations is sufficient to resolve the conjecture in this case. 
\end{abstract}

\section{Introduction}
\noindent Let $R = k[x_1, \ldots, x_n]$ be a polynomial ring over a field $k$ and let $M$ be a finitely-generated graded $R$-module.  Two long-standing conjectures concern the {\bf betti numbers} of $M$ and their relationship to  $c$, the codimension of $M$ (the height of the annihilator of $M$).
\begin{Conjecture*} Let $R$ and $M$ be as above and let $c = \codim M$. Then 
\begin{enumerate}
\item 	$\beta_i(M) \geq {c\choose i}$,
\smallskip
\item If $M$ is not (isomorphic to) a complete intersection then $\sum \beta_i(M) \geq 2^c+ 2^{c-1}$.
\end{enumerate}\
\end{Conjecture*}
\noindent We give a brief history of these conjectures in Section 2, but mention now that they are open when $c\geq 5$.  
The first main result of this paper establishes new cases of these conjectures with the assumption that $M = R/I$ defines a Gorenstein ring.

\begin{Theorem}\label{Thm:GorCodim5}
Suppose that $M = R/I$ is a Gorenstein ring of codimension $c$, not isomorphic to a complete intersection. \begin{itemize} \item If $c = 6$ then Conjecture $(1)$ holds for $i = 0,1,2,4,5,6$ without any further assumption on $M$. 
  \item With the added assumption that $M$ is pure, Conjecture (2) is true if $c\in \{5,6\}$ and Conjecture (1) is true if $c= 5$. 
  \end{itemize}
\end{Theorem}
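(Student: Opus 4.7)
The plan is to translate each statement of the theorem into an inequality on the Herzog--Kühl formulas for pure resolutions, exploit the self-duality of Gorenstein Betti tables to cut the parameter space down to something manageable, and then appeal to the classification of Diophantine obstructions developed earlier in the paper to rule out the remaining integer degree sequences that could produce counterexamples. The three tools in play throughout are: the Boij--Söderberg decomposition $\beta(M) = \sum_d c_d \pi(d)$, which writes $\beta_i(M)$ and $\sum_i \beta_i(M)$ as positive rational combinations of the corresponding values on pure diagrams; the Herzog--Kühl formula $\pi_i(d) = \prod_{j > 0} d_j / \prod_{j \ne i} |d_j - d_i|$ (normalized so that $d_0 = 0$ and $\pi_0(d) = 1$); and the Gorenstein self-duality $\beta_{i,j}(M) = \beta_{c-i, s-j}(M)$, which forces the decomposition to pair each $d$ with its reflection $\bar d_j := s - d_{c-j}$.

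For the pure Gorenstein cases, namely Conjecture (1) when $c = 5$ and Conjecture (2) when $c \in \{5, 6\}$, the decomposition has a single term $\pi(d)$ and the Gorenstein symmetry $d_i + d_{c-i} = s$ reduces the degree sequence to a three-parameter family $(d_1, d_2, s)$ with $0 < d_1 < d_2 < s/2$ (and $s$ even when $c = 6$, to accommodate the middle $d_3 = s/2$). The Koszul sequence $d_j = ja$, corresponding to the excluded complete intersection $R/(x_1^a, \ldots, x_c^a)$, is the unique point where $\beta_i = \binom{c}{i}$ and $\sum_i \beta_i = 2^c$. The plan is to verify the strict inequalities of Conjectures (1) and (2) on every remaining integer triple by rearranging each Herzog--Kühl bound into nonnegativity of an explicit polynomial in $(d_1, d_2, s)$, and then enumerating, via the Diophantine equations produced in the preceding section, the finitely many potentially sharp integer triples on which the bound must be checked directly. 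For the non-pure Gorenstein statement in codimension $6$, the strategy is to combine Boij--Söderberg with the Gorenstein pairing under $d \mapsto \bar d$: the negation of Conjecture (1) for a given $i \ne 3$ becomes a statement about non-negative rational combinations $\beta = \sum c_d \pi(d)$ with $\sum c_d = 1$ whose non-middle Betti numbers fall below $\binom{6}{i}$. Herzog--Kühl produces explicit polynomial relations among $\pi_i(d)$, $\pi_i(\bar d)$ and $\pi_c(d)$ which, combined with the requirement that the decomposition actually be realized by a Gorenstein $R/I$, define the Diophantine variety of potential counterexample degree sequences; the main content of this step is to show no integer point on this variety corresponds to a genuine Gorenstein ring.

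The heart of the argument, and the main obstacle, is the Diophantine analysis: showing that the Herzog--Kühl rational inequalities are sharp enough to exclude every admissible integer degree sequence other than the Koszul one. Near the Koszul sequence and in stretched asymptotic regimes the margin collapses, so near-extremal integer sequences must be ruled out either by explicit case analysis or by the number-theoretic methods developed earlier in the paper. Low codimension ($c \leq 6$) is essential: the Gorenstein symmetry leaves only three free parameters, so the Diophantine variety is low-dimensional and its integer points are enumerable. The case $i = 3$ in codimension $6$ is omitted because the middle column is self-paired under $i \mapsto c - i$, so the Gorenstein symmetry yields no compensating column there; and higher codimensions rapidly exceed the current reach of the Diophantine methods.
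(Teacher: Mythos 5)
Your outline for the pure cases is aligned with the paper's general strategy --- exploit the Gorenstein symmetry $d_i + d_{c-i} = s$ to cut the degree sequence down to a three-parameter family $(x,y,z)$, write the Herzog--K\"uhl conditions as a system of Diophantine equations, and rule out integer points --- but it has a gap that would make it fail as written. You assert that the Koszul sequence is essentially the only place the bounds can be tight, so that the Diophantine analysis is a matter of confirming strict inequalities away from it. That is false: there are Gorenstein-symmetric degree sequences whose $B$-vector genuinely violates the bounds. The paper itself exhibits $B(\{0,3,4,10,11,14\}) = \{1,10,11,11,10,1\}$, whose sum $44 < 2^5 + 2^4$. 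No amount of Herzog--K\"uhl rearrangement or rational-root analysis of the pure ray can exclude this; one must invoke commutative algebra to show no actual module lives at $L=1$ on that ray. The paper does this with three external inputs you never mention: Kunz's theorem that an almost complete intersection is never Gorenstein (forcing $a = \beta_1 \geq c+2$), the Evans--Griffith Syzygy Theorem (forcing $b = \beta_2 \geq a + 2$, resp.\ $\geq a+3$, which in particular kills the $\{1,10,11,11,10,1\}$ example), and for $c=6$ the VandeBogert--Walker Total Rank Theorem. These bounds are also what make the search finite in the first place: without Kunz and the Syzygy Theorem there is no a priori bound on $a$, and as the paper notes there are infinitely many pairs $(a,b)$ with small $\beta_3$. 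The ``finitely many potentially sharp integer triples'' you plan to enumerate are not finite absent these inputs, and the classification of obstructions ``from the preceding section'' that you want to cite only covers codimension~$3$.

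Your approach to the non-pure $c=6$ statement is farther off. You propose analyzing the Boij--S\"oderberg decomposition $\beta(M) = \sum c_d \pi(d)$ of an arbitrary Gorenstein Betti table and showing no convex combination below the binomial bound is realized by a Gorenstein ring. That is a much harder problem than what the theorem requires and you give no argument for how to solve it. The paper's actual argument is two lines and uses no decomposition at all: Gorenstein duality gives $\beta_i = \beta_{6-i}$ and the rank condition gives $\beta_3 = 2\beta_2 - 2\beta_1 + 2$, so $\sum \beta_i = 4 + 4\beta_2$; the Total Rank Theorem (in its ``equality iff complete intersection'' form) then gives $4 + 4\beta_2 > 64$, hence $\beta_2 \geq 16 > \binom{6}{2}$, while $\beta_1 \geq 8 > \binom{6}{1}$ by Kunz. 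Nothing Diophantine is needed for this part, and the index $i=3$ is omitted not because of the self-pairing symmetry but because $\beta_3 = 2\beta_2 - 2\beta_1 + 2$ can be small while $\beta_1, \beta_2$ are large, so the Total Rank bound gives no control on it.
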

A module $M$ is called {\bf pure} if it is Cohen-Macaulay and for all $i\geq 0$ its $i$th syzygy module is generated in a single degree $d_i$. To such a module we associate its {\bf degree sequence} $\{0,d_1, \ldots, d_c\}\footnote{Without loss of generality, we take $d_0=0$}.$  By a result of \cite{HK}, there is (almost) a formula for the betti numbers of such modules (the so-called Herzog-K\"uhl Equations \ref{eqn:herzog-kuhl}). Indeed, for each $D$ there is a list of integers $B(D)$ such that if $M$ is pure with degree sequence $D$ then its list of betti numbers, $\beta(M)$ satisfies $\beta(M) = LB(D)$ for some integer $L$. 

\begin{Example}\label{Ex:1} Suppose $M$ is a pure module with $c=3$ and degree sequence $D$.
\begin{enumerate} 
\item If $D = \{0,2,3,8\}$ then $B(D) = \{5, 20, 16, 1\}$ and thus $\beta(M)=\{5L, 20L,16L,L\}$. 
\item If $D= \{0,1,3,4\}$ then $B(D) = \{1,2,2,1\}$ and thus $\beta(M) = \{L, 2L, 2L, L\}$.  
\end{enumerate}
\end{Example}
\noindent Notice that the numerics of the two examples are fundamentally different.  For the first, regardless of what $L$ is, $B(D)$ is ``large'' enough to ensure that $M$ satisfies both conjectures.  On the other hand, in the second example, $B(D)$ itself violates the bounds of both conjectures.  When this happens we say that $D$ is an {\bf arithmetic obstruction} to the conjecture.  In other words, if $L=1$ then $\beta(M) = B(D)$ would violate the conjectures and so to prove that such a module $M$ does indeed satisfy the conjecture, one must show that it is impossible for $L$ to be $1$.  This is simple enough in this case, since such a module would have the minimal free that begins $R^2 \to R^1\to M$ and then evidently $M \cong R/I$ where $I$ is an ideal generated by $2$ minimal generators, which is impossible for an ideal of height $c=3$ by Krull's altitude theorem.  

In principle, one approach to resolving the Main Conjectures would be to classify all arithmetic obstructions and then rule out the corresponding small values of $L$.   In fact, one can get away without classifying \emph{all} obstructions (such an analysis in codimensions 5 and 6, is precisely how we prove Theorem \ref{Thm:GorCodim5}).  At the same time it \emph{is} possible to completely classify all obstructions in codimension 3:
\begin{Theorem}\label{thm1}
If $c=3$ then \begin{itemize}\item There are {\bf infinitely many} arithmetic obstructions to Main Conjecture (1). These are described explicitly in Theorem \ref{inf family with recursion} and all have either $B_1=2$ or $B_2=2$.
 \item 	
Regarding Main Conjecture (2), there are {\bf exactly 4} degree sequences $D$ with \\ $\sum B(D) < 12$. These are described explicitly in Theorem \ref{theorem for equal to 10}.
 \end{itemize}
If $M$ is a pure module that is not a complete intersection and its degree sequence $D$ is one of the offending sequences above, then in the equation $\beta(M) = L B(D)$, $L$ must be at least 2, recovering that the Main Conjectures hold in this case.
\end{Theorem}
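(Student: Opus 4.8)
The plan is to make the arithmetic of the Herzog--K\"uhl equations completely explicit in codimension $3$ and read off all three assertions from that. Write $D=\{0,a,b,c\}$ with $0<a<b<c$. The equations \ref{eqn:herzog-kuhl} give $B(D)=\tfrac1g(P_0,P_1,P_2,P_3)$, where
$$P_0=(b-a)(c-a)(c-b),\qquad P_1=bc(c-b),\qquad P_2=ac(c-a),\qquad P_3=ab(b-a),$$
and $g=\gcd(P_0,P_1,P_2,P_3)$. Two elementary observations drive everything. First, $P_1>P_0$ and $P_2>P_3$ (since $c>b$ and $c-a>b-a$), so $B_1>B_0\ge1$ and $B_2>B_3\ge1$; thus always $B_0,B_3\ge1$ and $B_1,B_2\ge2$. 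Hence Main Conjecture (1) can fail only in homological degree $1$ or $2$, and a failure forces $B_1=2$ (whence $B_0=1$) or $B_2=2$ (whence $B_3=1$) --- which is already the last clause of the first bullet. Second, dualizing sends $D$ to $D^{*}=\{0,c-b,c-a,c\}$ and $(B_0,B_1,B_2,B_3)$ to $(B_3,B_2,B_1,B_0)$, so we may always reduce to the case $B_1=2$.

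For the infinite family, $B_1=2$ amounts to $P_1=2P_0$ together with $B_0=1$. The first condition simplifies to $bc=2(b-a)(c-a)$, and the substitution $b=a+u$, $c=a+v$ turns it into $(u-a)(v-a)=2a^{2}$, that is, $(b-2a)(c-2a)=2a^{2}$. A short estimate rules out $b\le 2a$, and after rescaling one shows that the primitive solutions all satisfy $c=b+1$; writing $m=b-2a$ the equation becomes $m(m+1)=2a^{2}$, equivalently the Pell equation $(2m+1)^{2}-2(2a)^{2}=1$, so that $a^{2}$ is a triangular number. This Pell equation has a single orbit of solutions under the fundamental unit $1+\sqrt2$, generated by a linear recursion, and it produces the primitive degree sequences $\{0,1,3,4\}$, $\{0,6,20,21\}$, $\{0,35,119,120\},\dots$; together with their integer multiples and duals these are exactly the arithmetic obstructions to Main Conjecture (1), which is the content of Theorem \ref{inf family with recursion}. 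I expect the delicate step to be the reduction ``primitive $\Rightarrow c=b+1$'' together with the verification that on these sequences the remaining divisibilities $P_0\mid P_2$ and $P_0\mid P_3$ hold, so that $B_0$ is genuinely $1$: solutions of $(b-2a)(c-2a)=2a^{2}$ are abundant, but most of them have $B_0>1$ and are \emph{not} obstructions.

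For the sequences with $\sum B(D)<12$, the governing constraint is obtained by multiplying the three ratios $B_i/B_0$: since $P_1P_2P_3=a^{2}b^{2}c^{2}\,P_0$, we get
$$\frac{B_1B_2B_3}{B_0^{3}}=\Big(\frac{abc}{(b-a)(c-a)(c-b)}\Big)^{2},$$
so $B_0B_1B_2B_3$ must be a perfect square. Combined with $B_0,B_3\ge1$, $B_1,B_2\ge2$, primitivity of $B(D)$, and $\sum B_i\le 11$, this leaves only finitely many candidate vectors; each is then tested for realizability by solving the three ratio equations in $a<b<c$ (when $B_1=B_2$ they force $c=a+b$, which reduces matters to one parameter). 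Exactly four degree sequences survive,
$$\{0,1,2,3\},\quad \{0,1,3,4\},\quad \{0,3,5,8\},\quad \{0,1,5,6\},$$
with $B$-vectors $(1,3,3,1)$, $(1,2,2,1)$, $(1,4,4,1)$, $(2,3,3,2)$, which is Theorem \ref{theorem for equal to 10} and proves the second bullet.

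For the final assertion, suppose $M$ is pure, not a complete intersection, with degree sequence $D$ among the offending ones, and assume for contradiction that $L=1$, so $\beta(M)=B(D)$. If $D$ obstructs Main Conjecture (1), then after dualizing we may take $B_1=2$, hence $B_0=1$; then $M\cong R/I$ with $I$ an ideal of height $\codim M=3$ minimally generated by $\beta_1(M)=2$ elements, contradicting Krull's altitude theorem. Among the four sequences with $\sum B(D)<12$: the case $(1,2,2,1)$ is the one just treated; for $(1,3,3,1)$ the ideal $I$ is generated by $3$ elements and has height $3$, hence is generated by a regular sequence, so $M$ is a complete intersection, against hypothesis; and for $(1,4,4,1)$ and $(2,3,3,2)$ --- neither of which can be a complete intersection, since a codimension-$3$ complete intersection has Betti numbers $(1,3,3,1)$ --- a module with these Betti numbers has $\sum\beta_i=10<2^{3}+2^{2}$, contradicting the known validity of the Main Conjectures in codimension $3$ recalled in Section 2. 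In every case $L\ge2$, and then $\beta(M)=L\,B(D)$ satisfies both Main Conjectures, completing the proof.
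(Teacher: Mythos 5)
Your reformulation in terms of $P_0,P_1,P_2,P_3$ with $B(D)=(1/g)(P_0,\dots,P_3)$ is exactly the paper's setup, and the inequalities $P_1>P_0$, $P_2>P_3$ correctly reduce Conjecture (1) to the case $B_1=2$ (or dually $B_2=2$), matching Proposition~\ref{prop:Lis1ifB1istwo}. Your identity $B_0B_1B_2B_3=(P_0abc/g^2)^2$ is a genuinely useful observation the paper does not exploit: it rules out the candidate $(1,3,4,2)$ instantly (the product $24$ is not a square), whereas the paper devotes all of Case~1 of Theorem~\ref{theorem for equal to 10} to a parity argument for that sequence. That said, the proposal has real gaps.

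The hard step in the first bullet is precisely the claim that primitive solutions of $(b-2a)(c-2a)=2a^2$ with $B_0(D)=1$ have $c=b+1$; this is Theorem~\ref{Thm:AllarePell}, whose proof forces the denominator of $\pi_2(D)$ to be $\pm1$ by a divisibility argument. You flag this as ``the delicate step'' but supply no argument, and without it the Pell equation never appears. Likewise you do not verify that the Pell-family sequences $\{0,x_n,y_n,y_n+1\}$ genuinely have $B_0=1$ (the paper's Lemma~\ref{lemma with Bs}); as you yourself note, most solutions of $\pi_1=2$ have $B_0>1$ and are not obstructions, so this check is not optional. For the second bullet you omit the alternating-sum identity $B_0-B_1+B_2-B_3=0$ (Lemma~\ref{lemma:even}); your square criterion plus the inequalities still admits spurious vectors such as $(1,2,4,2)$ and $(1,3,4,3)$, and the ``realizability test'' that would eliminate them is only sketched. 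Finally, and most seriously in spirit, for $B(D)=(1,4,4,1)$ and $(2,3,3,2)$ you deduce $L\ge 2$ by invoking the known validity of the Main Conjectures in codimension~$3$. That is circular: the final clause of Theorem~\ref{thm1} is meant to show that arithmetic obstructions can be dispatched by \emph{independent} algebraic inputs --- Kunz's theorem that an almost complete intersection is never Gorenstein for $(1,4,4,1)$, and the bound $\beta_1\ge \beta_0+(c-1)$ of \cite{BERemarks} (or the Syzygy Theorem) for $(2,3,3,2)$ --- thereby \emph{recovering} the conjectures rather than presupposing them.
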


\noindent Taken together, the above Theorems give a proof of concept that this framework can shed light on questions concerning the ranks of syzygies. \medskip

From the point of view of Boij-S\"{o}derberg Theory (see e.g. \cite{BS,EFW, ES}), the set of all betti tables of Cohen-Macaulay modules of codimension $c$ lies in a simplicial cone whose extremal rays correspond to pure modules.  If $\rho_D$ is the extremal ray corresponding to degree sequence $D$ then, $B(D)$ gives the coordinates of the smallest integer point on that ray, and the theorem of Herzog-K\"uhl asserts that every betti table of any pure module of type $D$ will lie on $\rho_D$ as an integer multiple of $B(D)$. Our approach is similar in spirit to the one initiated in  \cite{ErmanSemigroup}, because although calculating $B(D)$ is immediate it can be quite difficult to tell whether or not the numbers $B(D)$ are indeed the betti numbers of a module. 

Because the numerics of the Boij S\"oderberg cone are rather subtle, we find pure modules to be an excellent testing ground for the Main Conjectures.  When $1\cdot B(D)$ obeys the bounds, that provides interesting positive evidence for the conjectures and when not, it directs our attempt to understand for which $L$, $L\cdot B(D)$ can be the betti sequence of a module.  

In an effort to systematically study the general case, we propose the following. 

\begin{Program}\label{Program:1}
Let $c$ be a positive integer.
\begin{enumerate}
	\item Identify those degree sequences $D = \{0, d_1, \ldots, d_c\}$ with the property that $B(D)$ violates Main Conjecture (1) or (2). We will call these {\bf arithmetic obstructions}.
	\item Recalling that if $M$ is a pure module of type $D$ then $\beta(M) = LB(D)$, for some integer $L \geq 1$, prove that if $M$ is a pure module with betti sequence $L\beta(D)$, then $L$ must be sufficiently large so that $\beta(M)$ satisfies the conjecture violated by $B(D)$.
\end{enumerate}
\end{Program}
	
\noindent When $c \in \{1, 2\}$ this program is trivial and Theorem \ref{thm1} settles the case when $c=3$.  For $c\geq 4$ this program seems quite difficult (e.g. see the discussion in Section \ref{subsection3.1}) and we hope that techniques from neighboring fields might help.



\bigskip 

\noindent{{\bf Why is this interesting?}  If one views Theorem \ref{thm1} as a new way of verifying a special case of the Main Conjectures when $c = 3$, then this is perhaps not so interesting. After all, both conjectures have been long known to be true for \emph{any} module in codimension $c\leq 4$. However, there are many features of Program \ref{Program:1} that are remarkable and and worth more careful study. 

\begin{itemize}
\item Although we cannot complete Program \ref{Program:1} in full generality for $c\geq 4$, this general approach is strong enough to resolve the Main Conjectures in several new cases for codimension 5 and 6, (see Theorem \ref{Thm:GorCodim5}). Indeed, the last section of the paper illustrates that resolving the Main Conjectures can be done even if fully classifying the obstructions is impossible. 
\item The classification in step (1) requires solving (systems of) diophantine equations which is a notoriously delicate enterprise.  As $c$ grows, it is not clear at all how the solutions to these systems will behave.

For instance when $c = 2$ we must find integer solutions to the equation
$$yz = 2(y-x)(z-x).$$
We solve this by first finding the rational solutions. This step can be done for any $c$ (see Theorem \ref{thm:generalprojectionproof}) but finding the integer solutions relies on some miracles that occur in the proof that then reduce the problem to solving Pell's equation, which has infinitely many solutions.  

For $c=3$ one of many equations that would need to be studied is 
$$yzw = 3(y-x)(z-x)(w-x).$$
We do not know whether there are infinitely many solutions $0< x< y< z< w$ with $\gcd(x,y,z,w)=1$.  Although not exactly relevant to our situation, we point out the fact that many elliptic curves have infinitely many rational points, but they always possess a finite number of integer points.  Do the systems of diophantine equations here follow a similar yoga?  
\item Although the number theory involved in the $c=3$ case is classical, and uses nothing more than Pell's Equation, to complete step (2) of the program one must use some rather sophisticated commutative algebra.  To verify that $\beta_1, \beta_2 \geq 3$ one can appeal to the principal ideal theorem.  However, regarding Main Conjecture (2), the degree sequence $D = \{0,3,5,8\}$ has $B(D) = \{1,4,4,1\}$, and $D' = \{0,1,5,6\}$ has $B(D') = \{2,3,3,2\}$ which both violate Conjecture 2.  To rule these out requires some more hard-hitting machinery, for instance $D$ can be addressed by the fact that an almost complete intersection is never Gorenstein \cite{Kunz}.  For $D'$ we could appeal either to the fact that any module $M$ with a $2\times 3$ linear presentation matrix must have quadratic Buchsbaum-Rim Syzygies, or perhaps by appealing to the fact that $\beta_1(M) \geq \beta_0(M) + (c-1)$. For both of these, see \cite{BERemarks}. 
Alternatively, one could use the Syzygy Theorem \cite{EG}. 
\end{itemize}
In short, our answer for why this is interesting is that we think the arithmetic questions deserve a careful study, and their resolution will then likely lead to interesting candidates for betti numbers that will need to be addressed using techniques from commutative algebra.  

Finally, we present some intriguing data from a computer search which generated following question. We call a degree sequence self-dual if its first difference is symmetric.   It is interesting to see which self-dual degree sequences could be the degree sequence of a Gorenstein ring that is not a complete intersection.  Evidently, $B_0(D)$ must equal $1$, and because an almost complete intersection is never Gornstein \cite{Kunz}, we need $B_1(D) \geq c+2$. 
\begin{Question}
For which $c$ does there exist a self-dual degree sequence $D$ with the property that $B(D)$ has $B_0(D) = 1$ and $B_1(D)\geq c+2$ that violates Main Conjecture (1) or (2)?  
\end{Question}
Computationally, we have found examples for $c = 15, 19, 25$. Such examples could suggest questions like: 

\begin{Question}\label{quest:gor}
Is there a Gorenstein algebra $R/I$ whose resolution is the following?
$$\begin{array}{ccccccccccccccccc}
       & 0 & 1 & 2 & 3 & 4 & 5 & 6 & 7 & 8 & 9 & 10 & 11 & 12 & 13 & 14 & 15\\
      \text{} & 1 & 85 & 630 & 2295 & 4998 & 6630 & 4590 & 2210 & 2210 & 4590 & 6630 & 4998 & 2295 & 630 & 85 & 1\\
      0: & 1 & . & . & . & . & . & . & . & . & . & . & . & . & . & . & .\\
      1: & . & 85 & 630 & 2295 & 4998 & 6630 & 4590 & . & . & . & . & . & . & . & . & .\\
      2: & . & . & . & . & . & . & . & 2210 & . & . & . & . & . & . & . & .\\
      3: & . & . & . & . & . & . & . & . & 2210 & . & . & . & . & . & . & .\\
      4: & . & . & . & . & . & . & . & . & . & 4590 & 6630 & 4998 & 2295 & 630 & 85 & .\\
      5: & . & . & . & . & . & . & . & . & . & . & . & . & . & . & . & 1
      \end{array}$$
Such an example would violate both Conjectures (1) and (2). This is the smallest symmetric degree sequence we have identified that cannot be ruled out using the techniques discussed above. If one doesn't require that $M$ be a cyclic module, then one can ask whether there is a Cohen-Macaulay module whose resolution is 
$$\begin{array}{cccccccccc}
       & 0 & 1 & 2 & 3 & 4 & 5 & 6 & 7 & 8\\
      \text{total:} & 4 & 25 & 60 & 60 & 42 & 60 & 60 & 25 & 4\\
      0: & 4 & 25 & 60 & 60 & . & . & . & . & .\\
      1: & . & . & . & . & 42 & . & . & . & .\\
      2: & . & . & . & . & . & 60 & 60 & 25 & 4
      \end{array}.$$
\end{Question}
\noindent This betti diagram appears in the survey article by the first author and Grifo \cite{BoocherGrifo}. Motivated by the seemingly different behavior for the two conjectures when $c=3$ we ask the following. 
\begin{Question}
Let $c \geq 4$.  
\begin{itemize}
\item Like in the codimension $3$ case, are there are infinitely many degree sequences of length $c$ that violate Main Conjecture (1)?  Does this answer change if we refine the question to also depend on the index $i$?
\item Similarly, are there only finitely many that violate Main Conjecture (2)?
\item What obstructions from Commutative Algebra are required to ensure that $L$ must be sufficiently large?
\end{itemize}
\end{Question}

\begin{Question}
	If it turns out that for some (most?) $c$ there are only finitely many arithmetic obstructions, for such $c$ let $\lambda_c$ denote the largest final index $d_c$ of any violator $D$. What are the asymptotics of the sets of numbers $\lambda_c$?
\end{Question}

\section{Background and History}\label{Sec:Back}
\noindent {\bf Brief history of the Main Conjectures:}  
For a detailed history of Conjectures (1) and (2) we refer the reader to \cite{BoocherGrifo}.  Here we give a brief summary with the goal of explaining the link between the two conjectures.  Main Conjecture (1) was first stated for cyclic modules $R/I$ independently by Horrocks \cite{HartshorneProblems} and Buchsbaum-Eisenbud \cite{BE}.  In the latter, their conjecture actually was that the minimal resolution of $R/I$ would support the structure of a DG-Algebra, and by comparison with a Koszul complex, the bound $\beta_i \geq {c \choose i}$ would follow.  This stronger statement about algebra structures was resolved in the negative by Avramov \cite{LuchoObstructions} but Main Conjecture (1) as stated here remains.  It is this form that is often called the Buchsbaum-Eisenbud-Horrocks Rank Conjecture.  It is known in many special cases, but is open in general if $c\geq 5$.

Until recently there was a ``middle'' conjecture, that $\sum \beta(M)\geq  2^c$, known as the Total Rank Conjecture.  This was settled by Walker \cite{W} in the case that the characteristic of $k$ is not $2$ and by VandeBogert-Walker \cite{VandeW} in the remaining case. As part of their work, they showed that equality holds if and only if $M$ is a complete intersection.  The content of Main Conjecture (2) is then that after $2^c$ the next possible total rank is $2^c+2^{c-1}$. 

When $c\leq 4$, the minimal possible betti numbers of $M$ are known. Here, by minimal we mean that the smallest elements in the poset determined by term-by-term inequality.  This classification appears in \cite{CEM} where they noted that the total sum was always at least $2^{c}+2^{c-1}$ except in the case that $M$ was isomorphic to a complete intersection, our Main Conjecture (2). Contemporaneously, the same statement was obtained for multigraded artinian modules in \cite{CE,Chara}.  Since then, Main Conjecture (2) has been verified in several other cases. Notably, in each of these results the bound $2^{c}+2^{c-1}$ appears in different guises.  For instance, in the multigraded Artinian case, it is true that if $R/I$ is not isomorphic to a complete intersection then $\beta_i(R/I) \geq {c\choose i} + {c-1 \choose i}$ from which the bound holds from summing over $i$.  This is not true in general (even for monomial ideals $I$ if $R/I$ is not Artinian!) but nevertheless a separate argument verifies Main Conjecture (2) in this general monomial case \cite{BS}.  Similarly, there is a rather strange behavior discussed in \cite{BoocherW} where if the module $M$ has suitably low regularity, then roughly the first half of the betti numbers $\beta_1, \beta_2, \ldots \beta_{c/2}$ satisfy $\beta_i \geq 2{c \choose i}$ whereas the other half obey $\beta_i \geq {c\choose i}$ by Earlier work of Erman \cite{Erman}. Thus, roughly, on \emph{average} each betti number is at least $1.5{c\choose i}$ and the sum is thus $1.5\cdot 2^c$ which is none other than $2^c+ 2^{c-1}$.  A classification of the betti tables in the Koszul almost complete intersection case by \cite{Mastroeni} also verifies Main Conjecture (2) as well. 

\medskip 

\noindent {\bf Relevant Boij-Sderberg Background:} Throughout this paper when we say {\bf pure module} we will mean a Cohen-Macaulay module $M$ whose $i$th syzygy module is generated in a single degree $d_i$.  If $c$ is the codimension of $M$ then $d_i = 0$ for $i>c$.  After shifting, if necessary, we will assume that $M$ is generated in degree $0$.  Thus to such a module $M$ we associate to it the {\bf degree sequence} $D = \{0, d_1, \ldots, d_c\}$, which is strictly increasing.  As a slight abuse of language, we will say that $D$ is a codimension $c$ degree sequence. By a result of \cite{HK} the betti numbers $\beta_i(M)$ for $i> 0$ are given by 
\begin{equation}\label{eqn:herzog-kuhl}\beta_i(M) = \beta_0(M)\prod_{i\neq j} \frac{d_j}{|d_i-d_j|}.\end{equation}

The numbers $\pi_i(D) := \prod_{i\neq j} \frac{d_j}{|d_i-d_j|}$ are thus crucial in understanding the betti numbers $\beta_i(M)$. (By definition, set $\pi_0 = 1$).  Note that because of the homogeneity in the formula for $\pi_i(D)$, if $\lambda \in \ZZ$, then $\pi_i(\lambda D) = \pi_i(D)$.  For this reason, we will be only interested in those $D$ with $\gcd(D) = 1$.  We call such $D$ non-degenerate, and unless otherwise stated, any statement that seeks to find all $D$ with a given property will seek to find the non-degenerate  ones.

We can think of the tuple $\pi(D) := (1, \pi_1(D), \ldots, \pi_c(D))$ as a vector in $\QQ^{c+1}$.  Then $\beta_i(M) = \beta_0(M)\cdot \pi_i(D)$.  We will sometimes refer to the rational numbers $\pi_i(D)$ but since at the end of the day, betti numbers are integers we also define $B(D)$ to be the smallest positive integer multiple of the vector $(1,\pi_i(D), \ldots, \pi_c(D))$ that lies in $\ZZ^{c+1}$, and will write $B_i(D)$ for its $i$th component with $i=0, \ldots, c$.  We note that what we call $B(D)$ is what is produced by the Macualay2 software package BoijS\"oderberg by calling \texttt{pureBetti D}.   Then it follows that if $M$ is any pure module with degree sequence $D$ then $\beta_i(M) = L\cdot B(M)$ for some positive integer $L$.  Note that $L$ depends on $M$.  We will often suppress this dependence because for our interests we will be considering all $M$ and seek a bound for $\beta_i(M)$, and so we will consider what the possible values of $L$ can be.

Finally, we remark that the dual $M^*$ of any pure module $M$ with degree sequence $D$ is also pure and has degree sequence$$D^* := \{0, d_c-d_{c-1}, d_c-d_{c-2}, \ldots, d_c -d_1, d_c\}.$$

\begin{Example}
If $D = \{0, 3, 4, 8\}$ then $\pi(D) = \{1, 32/5, 6, 3/5\}$, and $B(D) = \{5,32,30,3\}$.   Any pure module $M$ with this degree sequence will have betti numbers $\beta_i(M)= LB_i(D)$ for some $L$.  $D^* = \{0,4,5,8\}$ and has $B(D^*)=\{3,30,32,5\}$.  We show the betti diagrams below:
$$	\begin{array}{c} B(D)\\ \\ 
\begin{array}{c|cccc}
       & 0 & 1 & 2 & 3\\ \hline 
      0& 3 & . & . & .\\
      1 & . & . & . & .\\
      2 & . & . & . & .\\
      3 & . & 30 & 32 & .\\
      4 & . & . & . & .\\
      5 & . & . & . & 5
      \end{array} \end{array}
      \hskip 1in  
      \begin{array}{c}  B(D^*) \\ \\ 
      \begin{array}{c|cccc}
       & 0 & 1 & 2 & 3\\ \hline 
      0 & 5 & . & . & .\\
      1 & . & . & . & .\\
      2 & . & 32 & 30 & .\\
      3 & . & . & . & .\\
      4 & . & . & . & .\\
      5 & . & . & . & 3
      \end{array}
	\end{array}$$
\end{Example}

It was proven in \cite{ES,EFW} that pure modules exist for any degree sequence.  That is, for some $L$ there is a module $M$ with betti numbers $\beta_i(M) = LB_i(D)$. There are explicit constructions of such modules, but these constructions typically result in a rather large value of $L$ that is far from the minimal such one.  For instance, the Macaulay2 command \texttt{pureAll} can be used to find three explicit values for $\beta_0(M)$ that are possible.  In the previous example, these numbers are $35, 175, 10$ and so in particular we see that there is a module $M$ with betti numbers $\beta_i(M) = \{10, 64,60, 6\}$ or $L=2$ in our case.   However, in fact there actually is a module with $L=1$ which can be discovered by some experimentation with Macaulay2, using the commands \texttt{randomModule, randomSocleModule}.   For comparison, the codimension 15 degree sequence in Question \ref{quest:gor}, we are asking if $L=1$ is possible.  The bound supplied from known constructions is $L = 18240$.

Finally, because these two lemmas are useful in the rest of the paper, we include them here so that the remaining sections can flow more clearly. 

\begin{Lemma}\label{lemma:even} Suppose that $D$ is a degree sequence of arbitrary length $c$ and let $B = B(D)$.  Then 
\begin{enumerate}
	\item $B_0 - B_1 + \cdots + (-1)^c B_c = 0$.
	\item The sum of the terms with even index $s_{even}= \sum B_{2k}$ is equal to the sum of the terms with odd index $s_{odd} = \sum B_{2k+1}$
	\item The sum of all $B_i$ is even: $\sum B_i = 2s_{odd}$.
\end{enumerate}
\end{Lemma}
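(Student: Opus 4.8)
The plan is to prove all three parts by exploiting the fact that $B(D)$ lies on the ray spanned by the Herzog-K\"uhl vector $\pi(D) = (1, \pi_1(D), \ldots, \pi_c(D))$, so it suffices to establish the alternating-sum identity for the rational vector $\pi(D)$ and then observe that clearing denominators preserves it. Concretely, parts (2) and (3) are immediate formal consequences of part (1): splitting the alternating sum $\sum (-1)^i B_i = 0$ into even and odd indexed terms gives $s_{even} = s_{odd}$, and then $\sum B_i = s_{even} + s_{odd} = 2 s_{odd}$. So the entire content is part (1).

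For part (1), I would argue that $\sum_{i=0}^c (-1)^i \pi_i(D) = 0$. The cleanest route is the representation-theoretic / Boij-S\"oderberg one: any pure module $M$ with degree sequence $D$ has a minimal free resolution $0 \to F_c \to \cdots \to F_1 \to F_0 \to M \to 0$ with $F_i$ generated in degree $d_i$ and rank $\beta_i(M) = \beta_0(M)\pi_i(D)$. Since $M$ is Cohen-Macaulay of codimension $c$, it has rank $0$ as an $R$-module (its annihilator has positive height), so taking alternating sums of ranks of the free modules in the resolution yields $\sum_{i=0}^c (-1)^i \beta_i(M) = \operatorname{rank}_R M = 0$. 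Dividing by $\beta_0(M)$ gives $\sum (-1)^i \pi_i(D) = 0$, and since $B(D)$ is a positive integer multiple of $\pi(D)$, the same identity holds for the $B_i$. Because \cite{ES,EFW} guarantee such a module $M$ exists for every degree sequence, this is a legitimate argument; alternatively, one can avoid invoking existence by verifying the identity $\sum_{i=0}^c (-1)^i \prod_{j \neq i} \frac{d_j}{|d_i - d_j|} = 0$ directly as a rational-function identity in the variables $d_0 < d_1 < \cdots < d_c$. For the direct approach, after pulling out signs the absolute values unwind to $|d_i - d_j| = (-1)^{[j<i]}(d_i - d_j)$, and the sum becomes $\prod_j d_j$ times $\sum_i \frac{(-1)^i \cdot (\pm 1)}{d_i \prod_{j\neq i}(d_i - d_j)}$, which is a Lagrange-interpolation-type expression; one recognizes it as the coefficient extraction showing that the degree-$c$ polynomial interpolating suitable values vanishes, or equivalently as a partial-fraction identity for $1/\prod(t - d_j)$ evaluated appropriately.

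The main obstacle is bookkeeping the signs carefully in the direct computation: the factor $(-1)^i$ from the alternating sum interacts with the sign flips coming from replacing $|d_i - d_j|$ by $\pm(d_i - d_j)$ (there are exactly $c - i$ indices $j$ with $d_j > d_i$, contributing $(-1)^{c-i}$), and one must check these combine so that all terms in the partial-fraction sum carry a uniform sign and hence genuinely telescope to zero rather than to some nonzero multiple of $\prod d_j$. I expect the resolution-rank argument to be the shorter and more conceptual one, so I would lead with that and relegate the direct verification to a remark. Once part (1) is in hand, parts (2) and (3) follow in two lines as indicated above, completing the proof.
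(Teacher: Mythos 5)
Your proof is correct and takes essentially the same approach as the paper: both reduce parts (2) and (3) to the alternating-sum identity (1), and both prove (1) via the rank argument for a pure module with degree sequence $D$, invoking \cite{ES,EFW} for existence. The only cosmetic difference is that the paper passes to an Artinian reduction $\overline{M}$ before concluding $\operatorname{rank} = 0$, whereas you observe directly that a module of positive codimension is torsion.
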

\begin{proof}
Parts (2) and (3) follow immediately from part (1), which can be verified, either by a calculation with the formulas for $\pi(D)$, or by appealing to the fact that if $M$ is a Cohen Macaulay module of codimension $M$.  Then if $\overline{M}$ is an Artinian reduction of $M$ then $\overline{M}$ will be of finite length and thus will have rank $0$ as a module.  Note that the betti numbers of $M$ and $\overline{M}$ coincide.  Then by the Herzog-K\"uhl equations we have
$$0 = \mathrm{rank}(M) = \sum_{i=0}^c (-1)^i \beta_i(\overline{M}) = \sum_{i=0}^c (-1)^i \beta_i(M) = \sum_{i=0}^c (-1)^i L\cdot B_i(D)$$
for some positive integer $L$.  The result follows upon division by $L$. 
\end{proof}

\begin{Proposition}\label{prop:Lis1ifB1istwo}
Suppose $D = \{0,d_1, \ldots, d_c\}$ is a degree sequence. Then 
\begin{enumerate}
\item $\pi_1(D) > 1$;
\item $B_1(D) > B_0(D)$;
\item If $B_1(D) = 2$ then $B(D) = \pi(D)$. 
\end{enumerate}
\end{Proposition}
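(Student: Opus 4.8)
The plan is to prove the three parts in order, using the Herzog--K\"uhl formula $\pi_i(D) = \prod_{j\neq i} \frac{d_j}{|d_i-d_j|}$ throughout, and remembering that $d_0 = 0 < d_1 < \cdots < d_c$.

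\textbf{Part (1).} First I would write out $\pi_1(D)$ explicitly. Since $d_0 = 0$, the factor indexed by $j=0$ is $\frac{d_0}{|d_1 - d_0|} = 0$\,---\,wait, that cannot be right, so let me reread: the product in \eqref{eqn:herzog-kuhl} is over $j \neq i$, and for $i=1$ the $j=0$ term is $\frac{d_0}{|d_1 - d_0|} = 0$. This would force $\pi_1 = 0$, which is absurd, so evidently the convention is that $\pi_i(D) = \prod_{j \neq i} \frac{d_j}{|d_i - d_j|}$ should be read with $d_0$ in the \emph{denominator} role only, i.e. the normalization $\beta_0 = 1$ is already built in and the genuine formula is $\pi_i(D) = \prod_{j\neq i,\, j\geq 1} \frac{d_j}{d_i - d_j} \cdot \text{(sign)}$ together with the $j=0$ contribution $\frac{1}{d_i}$ coming from $\frac{d_0 - d_i}{\cdots}$ inverted. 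The cleanest route: use that $\pi_i(D) = \left|\prod_{j \neq i} \frac{d_j}{d_i - d_j}\right|$ where for $j = 0$ one interprets $d_0 = 0$ in the numerator as contributing a factor that, combined across the Herzog--K\"uhl normalization, yields $\pi_i(D) = \frac{1}{d_i}\prod_{1 \le j \le c,\ j\ne i}\frac{d_j}{|d_i - d_j|}\cdot d_i$\,---\,I would first pin down the exact normalized expression from the cited source, then for $i = 1$ it simplifies to $\pi_1(D) = \prod_{j=2}^{c} \frac{d_j}{d_j - d_1}$. Each factor $\frac{d_j}{d_j - d_1}$ is strictly greater than $1$ because $0 < d_1$ implies $0 < d_j - d_1 < d_j$. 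A product of finitely many reals each $> 1$ is $> 1$, giving $\pi_1(D) > 1$. (If $c = 1$ the product is empty; but then $D = \{0, d_1\}$ with $\gcd = 1$ forces $d_1 = 1$ and $\pi_1 = 1$\,---\,I would handle $c\ge 2$, or note the degenerate $c=1$ case separately, or simply observe the paper's running assumption rules it out.)

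\textbf{Part (2).} Since $B(D)$ is by definition the smallest positive \emph{integer} multiple of $(1, \pi_1(D), \ldots, \pi_c(D))$, we have $B(D) = L_0 \cdot (1, \pi_1(D), \ldots)$ for the integer $L_0 = B_0(D) \geq 1$ clearing all denominators. Then $B_1(D) = B_0(D) \cdot \pi_1(D) > B_0(D) \cdot 1 = B_0(D)$ by Part (1). That is the whole argument.

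\textbf{Part (3).} Suppose $B_1(D) = 2$. By Part (2), $B_0(D) < B_1(D) = 2$, and since $B_0(D)$ is a positive integer, $B_0(D) = 1$. But $B_0(D) = L_0$ is exactly the least common denominator scaling factor, so $B_0(D) = 1$ means $(1, \pi_1(D), \ldots, \pi_c(D))$ is already an integer vector, i.e. $B(D) = \pi(D)$. I expect the \textbf{main obstacle} to be purely bookkeeping: getting the $j = 0$ term in the Herzog--K\"uhl product convention exactly right (the excerpt writes the product over all $j \neq i$ including $j=0$ where $d_0 = 0$, which needs the correct interpretation so that $\pi_0 = 1$ and the formula doesn't collapse to $0$), and confirming that $B_0(D)$ really is the denominator-clearing constant $L_0$ rather than something larger\,---\,this follows because the $0$th coordinate of $(1,\pi_1,\ldots,\pi_c)$ is already $1$, so scaling to integrality is governed entirely by the other coordinates and $B_0(D)$ is precisely that scale factor. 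Once the convention is fixed, Parts (2) and (3) are immediate consequences of Part (1).
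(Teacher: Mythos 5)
Your proof is correct and rests on the same two ideas as the paper's: $\pi_1(D) = \prod_{j=2}^c \tfrac{d_j}{d_j - d_1} > 1$, and $B(D) = B_0(D)\cdot\pi(D)$ with $B_0(D)$ the minimal denominator-clearing integer. Your handling of parts (2) and (3) is in fact a small streamlining of the paper's argument, which writes $\pi_1 = P/Q$ in lowest terms and tracks divisibility, whereas you read off $B_1 = B_0\pi_1 > B_0$ directly and then conclude $B_0 = 1$ from $B_1 = 2$; both are fine, and your parenthetical worry about the $j=0$ factor in \eqref{eqn:herzog-kuhl} correctly identifies a minor imprecision in the paper's stated product (the index $j$ should run over $1,\dots,c$ with $j\neq i$), which the paper's own proof also silently uses.
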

\begin{proof}
Before clearing denominators we see that
$$\pi_1(D) = \frac{d_2\cdots d_c}{(d_2-d_1)\cdots(d_c-d_1)} = \frac{d_2}{d_2-d_1}\cdots\frac{d_c}{d_c-d_1} > 1\cdots 1 = 1$$
establishing $(1)$. Now if $\pi_1(D)$ is written in lowest terms $P/Q$ then by part (1) we have that $P > Q$.  Clearing denominators must require multiplying $\pi(D)$ be a multiple of $Q$, so we have that $Q|B_0(D)$, say $B_0(D) = QR$.  Then $B_1(D) = PR > QR = B_0(D)$ proving $(2)$.  Continuing with this notation, suppose now that $B_1(D) =2$.  Then since
$$2 = PR > QR \geq 1$$
the only possibility is for $Q = R = 1$ so $B_0(D) = 1$ and $B(D) = \pi(D)$, finishing the proof. 
\end{proof}

\section{Codimension 3 Degree Sequences with $B_1$ or $B_2$ equal to $2$}
\noindent  In this section we will suppose that $D = \{0,x,y,z\}$ is a degree sequence of codimension $3$. The goal of this section is to complete Program \ref{Program:1} for Main Conjecture (1) when $c=3$. In other words, we seek to find all $D$ so that $B(D)$ is not at least $\{1,3,3,1\}$.  After dualizing if necessary, the main content here is to understand when $B_1(D) = 2.$ By Proposition \ref{prop:Lis1ifB1istwo} we know that $B(D) = \pi(D)$. Hence our goal is to solve $$2 = \frac{yz}{(y-x)(z-x)},$$ or equivalently 
\begin{equation}\label{eq:1} 2(y-x)(z-x)=yz.\end{equation}
Let $x_0,y_0,z_0$ be integers.  Our goal is to determine when $(x_0,y_0,z_0)$ is a solution to Equation \ref{eq:1}.  Note that because of homogeneity, if $z_0 \geq 0$ then $(x_0,y_0, z_0)$ is a solution if and only if $(x_0/z_0,y_0/z_0,1)$ is a solution. Thus we look to find all rational solutions to the equation with $z = 1$:
\begin{equation}\label{eq:2} 2(y-x)(1-x)=y.\end{equation}

Note that the point $P=(0,0,1)$ is one such solution, and the only solution with $x = 0$.  If $Q=(X_0,Y_0,1)$ is another rational solution then the line in the $xy$ plane joining $P$ and $Q$ will be given by $y = (Y_0/X_0)x$ which has rational slope.  Conversely, any line with rational slope $m$ through $P$ will intersect the plane curve $C$ given by Equation \ref{eq:2} in at most one other point, $Q$ which must have rational coordinates $(X_0, Y_0, 1)$ with $Y_0/X_0 = m$.  
\begin{center}
\begin{figure}
\includegraphics[width=3in]{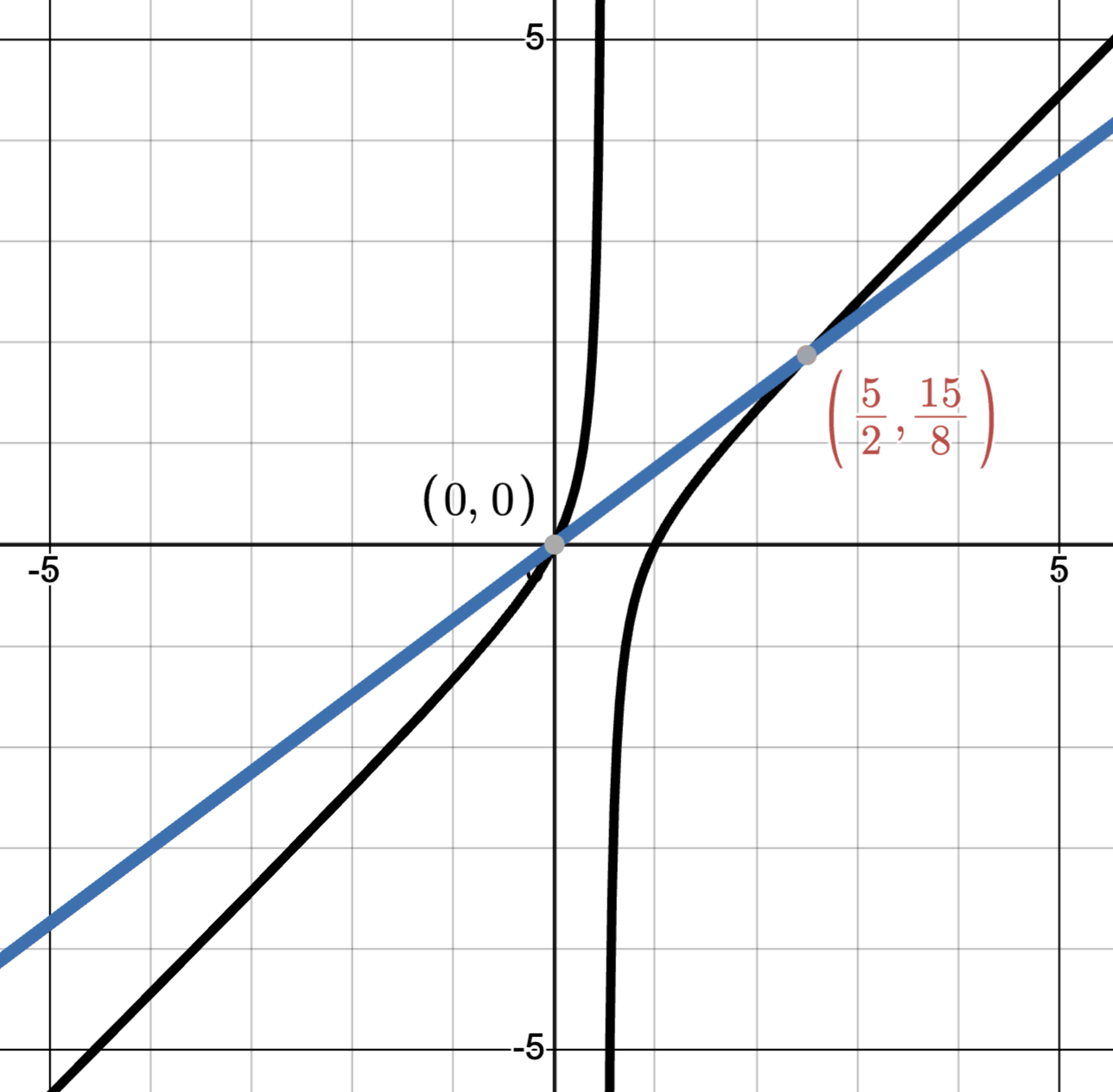}	
\caption{The graph of $2(y-x)(1-x)=y$. Rational points, which are parametrized by lines with rational slopes through $(0,0)$, correspond to integer solutions to equation \ref{eq:1}.}
\end{figure}
\end{center}
Solving for $Q$ means finding the intersection of the line with the curve $C$, which yields the equation $$2(mx-x)(1-x)=mx,$$ which simplifies to 
$$x((2-2m)x+m-2) = 0.$$
If $m = a/b$ (with $\gcd(a,b) = 1$) this yields that $Q$ has coordinates:
$$\left(1+\frac{a}{2(b-a)}, \frac{a}{b}+\frac{a^2}{2b(b-a)},1\right) =\left(1+\frac{a}{2(b-a)}, \frac{a(2b-a)}{2b(b-a)},1\right).$$ 
This description is a complete parametrization of all rational points on $C$. To determine integer solutions Equation \ref{eq:1} we have to find all multiple that will clear denominators.  To do so we need to first put these fractions in lowest terms.  

Note that $\gcd(a,b-a) = \gcd(a,b) = 1$ and $\gcd(2b-a,b-a) = \gcd(2b-a,b) = \gcd(a,b) = 1$, so if there are any common factors then they must be with the $2$ in the denominator.  In other words, either the fractions are in lowest terms or we may need to cancel a factor of $2$ and then it will be in lowest terms.  

{\bf Case 1}: $a$ is odd.  In this case, both fractions have odd numerator and so the fraction is in lowest terms.  

{\bf Case 2}: $a$ is even, say $a= 2A$.  Note since $\gcd(a,b)= 1$ we must have that $b$ is odd. Then after canceling the factor of $2$, we have the expression in lowest terms:
$$\left(1+\frac{A}{(b-2A)}, \frac{A(2b-2A)}{b(b-2A)},1\right).$$
Since these expressions involve fractions in lowest terms, we can clear denominators to classify the integer solutions.  This proves the following proposition.
\begin{Proposition}
	The integer solutions $(x_0, y_0, z_0)$ to Equation \ref{eq:1} with $z_0 \neq 0$ are the integer multiples of 

(Type 1) $\left(b(2b-a), a(2b-a),2b(b-a)\right)$, with $a$ odd and $\gcd(a,b) = 1$.

(Type 2) $\left(b^2-Ab, A(2b-2A),b(b-2A)\right)$, with $b$ odd and $\gcd(a,b) = 1$.
\end{Proposition}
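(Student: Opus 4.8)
The strategy is to take the complete rational parametrization of the plane curve $C$ of Equation \ref{eq:2} obtained above and convert it into a classification of \emph{integer} solutions of Equation \ref{eq:1} by clearing denominators. First I would invoke homogeneity: since Equation \ref{eq:1} is homogeneous, an integer triple $(x_0,y_0,z_0)$ with $z_0\neq 0$ solves it if and only if the rational triple $(x_0/z_0,\,y_0/z_0,\,1)$ solves Equation \ref{eq:2}; conversely, given any rational solution $(X_0,Y_0,1)$ of Equation \ref{eq:2}, the integer solutions of Equation \ref{eq:1} proportional to it are exactly the integer multiples of the triple obtained by scaling $(X_0,Y_0,1)$ by the least common denominator of $X_0$ and $Y_0$ written in lowest terms. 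So the problem reduces to writing the parametrized point $Q=\left(\tfrac{2b-a}{2(b-a)},\ \tfrac{a(2b-a)}{2b(b-a)},\ 1\right)$ in lowest terms and reading off the numerators and the common denominator.

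The key simplification is the remark already recorded above that the numerators $2b-a$ and $a(2b-a)$ are coprime to $b$ and to $b-a$, via the Euclidean identities $\gcd(a,b-a)=\gcd(a,b)=1$ and $\gcd(2b-a,b-a)=\gcd(2b-a,b)=1$; hence the only possible cancellation against the denominators $2(b-a)$ and $2b(b-a)$ is a single factor of $2$, and the argument splits on the parity of $a$. When $a$ is odd the numerators are odd, $Q$ is already in lowest terms, the common denominator is $2b(b-a)$, and scaling gives the Type 1 triple $\left(b(2b-a),\ a(2b-a),\ 2b(b-a)\right)$. When $a$ is even, say $a=2A$, coprimality of $a$ and $b$ forces $b$ odd; cancelling the factor of $2$ rewrites $Q$ as $\left(\tfrac{b-A}{b-2A},\ \tfrac{A(2b-2A)}{b(b-2A)},\ 1\right)$, now in lowest terms with common denominator $b(b-2A)$, and scaling gives the Type 2 triple $\left(b^2-Ab,\ A(2b-2A),\ b(b-2A)\right)$. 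A short additional gcd check (using oddness of $2b-a$, respectively of $b$) shows each output triple is primitive, a convenient sanity check but not strictly needed for the statement.

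The part I expect to need the most care is the interface between ``rational points on $C$'' and ``homogeneous integer classes''. One must confirm that the pencil of lines of rational slope through $P=(0,0,1)$ genuinely enumerates every rational point of $C$ (the parametrization above does this, but it is worth noting that $P$ itself is recovered, for instance at $(a,b)=(2,1)$, so the degenerate solutions $(0,0,z_0)$ are absorbed into the Type 2 family), that replacing $(a,b)$ by $(-a,-b)$ leaves the output triple unchanged so the families are cleanly indexed by the slope $m=a/b$, and that the few degenerate parameter values for which the formula returns a triple with $z_0=0$ (namely $b=0$ or $b=a$ in Type 1) correspond to the point at infinity and so lie outside the scope of the statement. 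Once this bookkeeping is in place, the two parity cases are entirely mechanical and produce exactly the Type 1 and Type 2 lists.
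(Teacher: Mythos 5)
Your argument is the same as the paper's: parametrize rational points of the dehomogenized curve by lines of rational slope through $(0,0,1)$, use $\gcd(a,b)=1$ to see that only a factor of $2$ can cancel between numerators and denominators, split on the parity of $a$, and clear denominators to read off the two families. The extra bookkeeping you add (primitivity of the output triples, absorption of $(0,0,z_0)$ into Type 2, the degenerate $b=0$ or $b=a$ parameter values) is correct and not in conflict with the paper, just more explicit.
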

The next result is critical.

\begin{Theorem}\label{Thm:AllarePell}
If $D = \{0,x_0, y_0, z_0\}$ is a degree sequence with $B_2(D) = 2$ then $z_0 = y_0 + 1$.	
\end{Theorem}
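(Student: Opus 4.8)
The plan is to feed the hypothesis into the parametrization of Equation~\ref{eq:1} obtained in the preceding Proposition, after first dualizing. Since $B_2(D)=2$, Lemma~\ref{lemma:even}(1) reads $B_0-B_1+2-B_3=0$, i.e. $B_1+B_3=B_0+2$; as Proposition~\ref{prop:Lis1ifB1istwo}(2) gives $B_1\ge B_0+1$ and clearly $B_3\ge 1$, this equality forces $B_3(D)=1$ and $B_1(D)=B_0(D)+1$. Hence $B(D)=B_0(D)\,\pi(D)$ with $\pi_3(D)=1/B_0(D)$ and $\pi_2(D)/\pi_3(D)=2$. Now pass to the dual sequence $D^{*}=\{0,\,z_0-y_0,\,z_0-x_0,\,z_0\}$, which is still non-degenerate because $\gcd(z_0-y_0,z_0-x_0,z_0)$ divides $\gcd(x_0,y_0,z_0)=1$. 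From $\pi_i(D^{*})=\pi_{c-i}(D)/\pi_c(D)$ we get $\pi_1(D^{*})=\pi_2(D)/\pi_3(D)=2$, while $\pi_2(D^{*})=\pi_1(D)/\pi_3(D)$ and $\pi_3(D^{*})=1/\pi_3(D)=B_0(D)$ are integers; so $B_1(D^{*})=2$, $B(D^{*})=\pi(D^{*})$ by Proposition~\ref{prop:Lis1ifB1istwo}(3), and the triple $(z_0-y_0,\ z_0-x_0,\ z_0)$ is a non-degenerate integer solution of Equation~\ref{eq:1}. In these terms the assertion $z_0=y_0+1$ becomes a statement about a pair of consecutive entries of the dual sequence $D^{*}$.

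It therefore suffices to analyze non-degenerate $E=\{0,p,q,r\}$ with $\pi_1(E)=2$ and $\pi(E)$ integral. One could quote the classifying Proposition directly, noting that $(p,q,r)$ is then an integer multiple of a Type~1 or Type~2 vector and, by non-degeneracy, equals that vector divided by its own gcd; but I would run it more transparently by a direct descent. Rewrite $\pi_1(E)=\tfrac{qr}{(q-p)(r-p)}=2$ as $(q-2p)(r-2p)=2p^2$; as in the positivity discussion following Equation~\ref{eq:2} both factors are positive, so put $d:=q-2p>0$, $r-2p=2p^2/d$, and a one-line simplification gives $\pi_3(E)=\dfrac{d^2}{2p^2-d^2}$. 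With $g:=\gcd(d,p)$, $d=gd'$, $p=gp'$, $\gcd(d',p')=1$, this is $\pi_3(E)=\dfrac{d'^2}{2p'^2-d'^2}$, so $2p'^2-d'^2$ divides $d'^2$. Any common prime of $2p'^2-d'^2$ and $d'^2$ divides $2p'^2$, hence—using $\gcd(d',p')=1$—equals $2$, which forces $2p'^2-d'^2\in\{1,2\}$: either $d'$ is odd with $2p'^2-d'^2=1$, or $d'=2t'$ with $p'^2-2t'^2=1$. Both are Pell's equation; this is the source of the theorem's name and of the recursion in Theorem~\ref{inf family with recursion}.

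The final step—and the one I expect to carry the weight—is to show that non-degeneracy forces the leftover scaling to be trivial. Solving back, $q=2p+d$ and $r=2p+2p^2/d$ are integers exactly when $d'\mid 2g$, and tracking the parity ($d'$ odd in the first Pell case, $d'=2t'$ with $\gcd(t',p')=1$ in the second) this collapses to a single divisibility $g=h\cdot g_0$; substituting gives $E=h\cdot E_0$ for an explicit primitive triple $E_0=E_0(p',d')$, so non-degeneracy of $E$ gives $h=1$. A direct computation with $E_0$ then shows two of its entries differ by exactly $2p'^2-d'^2=1$, which—tracing the substitutions back through the dual of the first paragraph—is the asserted relation $z_0=y_0+1$. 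The real obstacle is this bookkeeping: one must juggle $\gcd(d',p')=1$ against the parity constraints from the two Pell cases and verify that no extra common factor can survive, since a single slip would admit a spurious infinite family of counterexamples.
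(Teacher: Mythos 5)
The theorem as printed has a hypothesis/conclusion mismatch, and that is exactly where your proof breaks. The paper's own proof, and the only place the theorem is invoked (in Theorem~\ref{inf family with recursion}), both use the hypothesis $B_1(D)=2$; taken literally with $B_2(D)=2$ the stated conclusion is false. Indeed your own analysis produces counterexamples: in the odd-$d'$ Pell case with $p'=5,\ d'=7$ you get $E_0=(35,119,120)$, hence $D=E_0^{*}=\{0,1,85,120\}$ with $B(D)=\{49,50,2,1\}$, so $B_2(D)=2$ but $z_0-y_0=35$. (A smaller one is $D=\{0,1,15,21\}$, $B(D)=\{8,9,2,1\}$, $z_0-y_0=6$.) Your first paragraph — dualizing the $B_2$ hypothesis to get $B_1(D^{*})=2$ — is fine, and your descent is fine; the error is in the closing sentence. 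The two entries of $E_0$ that differ by $2p'^2-d'^2=1$ are the \emph{last} two, $r_0-q_0=1$ (and likewise $p'^2-2t'^2=1$ in the even case). Since $E=D^{*}$ and $D=E^{*}=\{0,\ r_0-q_0,\ r_0-p_0,\ r_0\}$, the relation $r_0-q_0=1$ translates to $x_0=1$, \emph{not} to $z_0=y_0+1$. So your argument, correctly completed, proves ``$B_2(D)=2\Rightarrow x_0=1$,'' a true statement dual to the intended one — and in the process it exhibits that the theorem as literally printed cannot hold. Had you pinned down which entries differ by $1$ before ``tracing back through the dual,'' you would have caught this. If instead you run your descent directly on $D$ under the hypothesis $B_1(D)=2$ (no initial dualization), you do obtain $z_0-y_0=1$, which is the statement the paper actually proves and uses.

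Setting aside the duality slip, your route to Pell is genuinely different from the paper's and is arguably cleaner: you rewrite $\pi_1=2$ as $(q-2p)(r-2p)=2p^2$, compute $\pi_3=d'^2/(2p'^2-d'^2)$ after pulling out $g=\gcd(d,p)$, and use integrality of $\pi_3$ plus $\gcd(d',p')=1$ and parity to force $2p'^2-d'^2\in\{1,2\}$. The paper instead parametrizes rational points on the conic $2(y-x)(1-x)=y$ by lines of rational slope through the origin, splits into two families by the parity of the slope's numerator, and then imposes integrality of $\pi_2$ on each family. Both arrive at the same Pell condition, but your completing-the-square argument avoids the explicit parametrization and the clearing-of-denominators case analysis. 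The gcd bookkeeping you flagged as the risky step does in fact go through cleanly: in both parity cases $\gcd(E_0)=1$, so non-degeneracy forces $h=1$. The genuine gap was not there but in the final duality translation.
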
\begin{proof}
\noindent We will use the fact that all of the entries of $B(D) =\pi(D)$ are integers, i.e. that $B_2 = \pi_2(D)$ must also be an integer, so we have that 
$$\frac{xz}{(y-x)(z-y)}\in \ZZ$$
for all of the solutions described above.   Inserting them into this, we obtain for Type 1: 
\begin{eqnarray*}
\frac{xz}{(y-x)(z-y)} &=& \frac{b(2b-a)2b(b-a)}{(a(2b-a)-b(2b-a))(2b(b-a)-a(2b-a))} \\ 
& = & \frac{-2b^2}{2b^2+a^2-4ab}.
\end{eqnarray*}
This fraction must be an integer, so the denominator must be a factor of the numerator. We will show that the denominator is in fact $\pm 1$. Since the denominator is odd (as $a$ is odd), it must be that $b^2$ is divisible by the denominator, so say $-2b^2 = \ell(2b^2 +a^2 - 4ab)$ for some integer $\ell$.  Since $a$ and $b$ are relatively prime, it follows that $\ell$ must be a multiple of $b$.  After canceling this factor of $b$ we get an equation $-2b = \ell'(2b^2 + a^2 -4ab)$ for some integer $\ell'$ which again can be seen to be a multiple of $b$. After cancellation we get an equation $-2 = \ell''(2b^2+a^2-4ab)$ for some integer $\ell''$.  After dividing by $\ell''$ we obtain 
$$2b^2 + a^2 - 4ab = \frac{-2}{\ell''}.$$
The left hand side is an odd integer, which thus must be $\pm 1$.  We have shown that $2b^2 + a^2- 4b = \pm 1$.  Note that this is precisely the condition that $z_0 - y_0 = \pm 1$. 

In the other case we get

$$\frac{xz}{(y-x)(z-y)} = \frac{(b^2-Ab)(b(b-2A))}{(A(2b-2A) -b^2+Ab)(b(b-2A)-A(2b-2A))}$$
$$ = \frac{-b^{2}}{2A^{2}-4Ab+b^{2}}.$$

A similar analysis, in this case using that $b$ is odd we see that $2A^2 -4Ab +b^2 =\pm 1$ and thus that $z_0-y_0 =\pm 1.$  In either case, since $D$ is a degree sequence, $z_0 > y_0$ and the result follows. 
\end{proof}
\begin{Remark}
	We note that this result frankly arises as a sort of miracle.  Were this not the case, we would have a rather messy endeavor to try and clear denominators.  For instance in codimension $4$, we have found the following examples of $D$ with $B_1(D) =2$.  Note that it is {\bf not} always the case that $D_c - D_{c-1} = 1$.  At this time we do not know whether to expect infinitely many such $D$ as in the $c=3$ case.  The difficulty arises because when one tries to clear denominators in the analogous way we just did, there are four numbers $a_1, b_1, a_2, b_2$ representing the numerators and denominators of the ``slopes'' used in the parametrization. One cannot assume that e.g. $a_1$ and $b_2$ are relatively prime, and the analysis becomes quite messy.  Further, even if one were able to do this, the next step would then be to find integer solutions to a cubic diophantine equation, for which we do not know how to proceed.  We hope experts from adjacent fields in number theory and arithmetic geometry may have insights on this behavior.  
$$\begin{array}{|c||c|}\hline 
D  & B(D)  \\ \hline 	
	(0, 1, 4, 5, 6)&(1, \boxed{2}, 5, 6, 23)\\ \hline 
(0,3,13,15,16)&(1,\boxed{2},12,26,15)\\ \hline 
(0, 9, 42, 44, 45) & (1, \boxed{2}, 90, 243, 1543)\\ \hline  
(0,20,95,96,100) & (1, \boxed{2}, 512, 625, 114)  \\ \hline 
(0, 35, 168, 170, 171) & (1, \boxed{2}, 1275, 3724, 2450)\\  \hline 
\end{array}$$

\end{Remark}

We are now able to prove our main result. 
\begin{Theorem}\label{inf family with recursion}
The set of degree sequences $D$ of codimension $3$ with with $B_1(D) = 2$ is an infinite family. They each have the form $\{0, x_n, y_n, y_n+1\}$ for $n\geq 1$ where $x_n,\ y_n$ are given recursively by the following formula: 
$$\begin{bmatrix} x_{n+1}\\ y_{n+1} 	
\end{bmatrix} = 
	\begin{bmatrix}
		-1 & 2 \\ -4 & 7 
	\end{bmatrix}
	\begin{bmatrix}
	x_{n} \\ y_{n} 	
	\end{bmatrix}  + \begin{bmatrix}
		1 \\ 3
	\end{bmatrix}, \hskip .3in
	\begin{bmatrix} x_{0}\\ y_{0} 	
\end{bmatrix} =\begin{bmatrix}
0 \\ 0 	
\end{bmatrix}.
$$
\end{Theorem}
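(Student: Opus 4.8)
The plan is to convert the hypothesis $B_1(D)=2$ into the negative Pell equation $a^2-2b^2=-1$, classify its positive solutions in the usual way, and read the stated affine recursion directly off the Pell recursion. First, if $D=\{0,x,y,z\}$ has $B_1(D)=2$, then Proposition~\ref{prop:Lis1ifB1istwo} gives $B(D)=\pi(D)$, hence $\pi_1(D)=\tfrac{yz}{(y-x)(z-x)}=2$, i.e. $2(y-x)(z-x)=yz$, and Theorem~\ref{Thm:AllarePell} forces $z=y+1$. Setting $u:=y-x$ and substituting $z=y+1$, this becomes $2u(u+1)=y(y+1)$; multiplying by $4$ and completing the square yields
\[
2(2u+1)^2=(2y+1)^2+1, \qquad\text{i.e.}\qquad (2y+1)^2-2(2u+1)^2=-1,
\]
so $(a,b):=(2y+1,\,2u+1)$ is a positive solution of $a^2-2b^2=-1$.

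Conversely I would verify that every such Pell solution produces a $D$ with $B_1=2$. The positive solutions of $a^2-2b^2=-1$ form a single orbit under the fundamental unit $3+2\sqrt2$: with $(a_0,b_0)=(1,1)$ one has $(a_{n+1},b_{n+1})=(3a_n+4b_n,\,2a_n+3b_n)$. An immediate induction shows each $a_n,b_n$ is odd, and from $a_n^2-b_n^2=b_n^2-1$ one sees $a_n\neq b_n+1$, whence $a_n\ge b_n+2$ for all $n$, while $b_n\ge 3$ once $n\ge 1$. Thus $y_n:=\tfrac{a_n-1}{2}$, $u_n:=\tfrac{b_n-1}{2}$ and $x_n:=y_n-u_n=\tfrac{a_n-b_n}{2}$ are integers with $0<x_n<y_n$, so $D_n:=\{0,x_n,y_n,y_n+1\}$ is a bona fide codimension-$3$ degree sequence, automatically non-degenerate since $\gcd(y_n,y_n+1)=1$. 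For such a $D_n$ we have $\pi_1(D_n)=2$ by construction, and the remaining Herzog--K\"uhl values $\pi_2(D_n)=\tfrac{x_n(y_n+1)}{y_n-x_n}$ and $\pi_3(D_n)=\tfrac{x_ny_n}{y_n+1-x_n}$ are integers: reducing the numerators modulo $y_n-x_n=u_n$ and modulo $y_n+1-x_n=u_n+1$ and using the identity $y_n(y_n+1)=2u_n(u_n+1)$ makes both $\equiv 0$. Hence $B(D_n)=\pi(D_n)$ and $B_1(D_n)=2$, and $n\mapsto D_n$ is a bijection from $\{n\ge 1\}$ onto the set of codimension-$3$ degree sequences with $B_1=2$ (the solution $(1,1)$ being excluded as it forces $y=0$).

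It remains to translate the recursion: substituting $a_n=2y_n+1$ and $b_n=2(y_n-x_n)+1$ into $a_{n+1}=3a_n+4b_n$ and $b_{n+1}=2a_n+3b_n$ and simplifying gives $y_{n+1}=7y_n-4x_n+3$ and $x_{n+1}=2y_n-x_n+1$, which is precisely the displayed affine map, with base point $(x_0,y_0)=(0,0)$ corresponding to $(a_0,b_0)=(1,1)$. Since the $b_n$ strictly increase there are infinitely many solutions, so the family is infinite.

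I expect the load-bearing step to be the converse direction: showing that every member of the Pell family genuinely has $B_1=2$ (not merely $\pi_1=2$) requires both the integrality of $\pi_2$ and $\pi_3$ --- a second use of the identity $y(y+1)=2u(u+1)$ --- and the inequalities $0<x_n<y_n$ that make $D_n$ a legitimate strictly increasing degree sequence. The forward direction is bookkeeping once Theorem~\ref{Thm:AllarePell} is available, and the passage to Pell is a routine completion of the square; the one classical input to keep in view is that $a^2-2b^2=-1$ is solvable (the continued fraction of $\sqrt2$ has odd period) and that all its positive solutions lie in the single orbit of $(1,1)$ under the fundamental unit.
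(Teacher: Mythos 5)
Your proof is correct and follows the same overall strategy as the paper: reduce to $z=y+1$ via Theorem~\ref{Thm:AllarePell}, complete the square to land on a Pell equation, classify solutions by the fundamental unit $3+2\sqrt2$, and translate back to the stated affine recursion on $(x_n,y_n)$. The one genuine variation is in the completion of the square. The paper substitutes $w=y-2x$ and arrives at the \emph{positive} Pell equation $v^2-2u^2=1$ with $v=2(y-2x)+1$, $u=2x$, whereas you set $u=y-x$ and arrive at the \emph{negative} Pell equation $a^2-2b^2=-1$ with $a=2y+1$, $b=2(y-x)+1$. Both families are single orbits under $3+2\sqrt2$, and an elementary computation shows the induced recursion on $(x_n,y_n)$ comes out identical, so this is an equivalent normalization rather than a different method; your choice has the mild bookkeeping advantage that $a$ and $b$ are automatically odd, so the divisions by $2$ in $y_n=(a_n-1)/2$ and $x_n=(a_n-b_n)/2$ need no parity check (the paper must observe that $u$ is forced even and $v$ forced odd). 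You also fold the converse direction into the proof --- checking that each Pell solution yields a bona fide strictly increasing, non-degenerate $D$ and that $\pi_2(D),\pi_3(D)\in\ZZ$ --- where the paper delegates this to the separate Lemma~\ref{lemma with Bs}. Your modular verification, using $y(y+1)=2u(u+1)$ to reduce the numerators of $\pi_2=\frac{x(y+1)}{y-x}$ and $\pi_3=\frac{xy}{y+1-x}$ modulo $u$ and $u+1$, is correct and in fact recovers the paper's closed forms $\pi_2=y-2x+1$, $\pi_3=y-2x$.
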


\begin{proof}
Suppose that $D = \{0, x,y,z\}$ is a degree sequence with $B_1(D) = 2$. By Proposition \ref{prop:Lis1ifB1istwo} we know that 
$B_1(D) = \pi(D)$ so we have the following equation:
$$2(y-x)(z-x) = yz.$$
By Theorem \ref{Thm:AllarePell} we know that $z = y+1$.   Making this substitution and rewriting, we get
$$y^{2}+y-4xy = -2x^2+2x.$$
After completing the square, one obtains: 
$$(y-2x)^2 +y = 2x^2+2x,$$ 
$$(y-2x)^2 +(y-2x) = 2x^2.$$
Letting $w = y-2x$ and completing the square once again we get
$$(w+\frac12)^2 = 2x^2+\frac14,$$
$$(2w+1)^2 = 2(2x)^2 + 1.$$
Letting $v = 2w+1$, $u = 2x$ gives us a Pell's equation 
$$v^2 = 2u^2 + 1.$$
The infinitely many solutions to this equation $v_n, u_n$
are given by equating coefficients: 
$$(3 + 2\sqrt2)^n = v_n + u_n \sqrt 2.$$
This yields the recurrence relation that 
\begin{eqnarray*}
v_{n+1}+ u_{n+1}\sqrt2 &=& (3+2\sqrt2)(v_n + u_n \sqrt 2)\\
& = & (3v_n + 4u_n) + (2v_n + 3u_n)\sqrt2. 
\end{eqnarray*}
So that
\begin{equation}\label{eqn:matrix1}
\begin{bmatrix} u_{n+1}\\ v_{n+1} 	
\end{bmatrix} = 
	\begin{bmatrix}
		3 & 2 \\ 4 & 3 
	\end{bmatrix}
	\begin{bmatrix}
	u_{n} \\ v_{n} 	
	\end{bmatrix}.
	\end{equation}
Noting that $x = u/2$ and $y=  u +v/2- 1/2$ we let $A = \begin{bmatrix}
	1/2 & 0 \\ 1 & 1/2
\end{bmatrix}$ and then we note that 
$$A\begin{bmatrix}
u_n \\ v_n 	
\end{bmatrix} + \begin{bmatrix} 0 \\-1/2 
 	\end{bmatrix} = \begin{bmatrix}
x_n \\ y_n 	
\end{bmatrix}.
$$
Now starting with Equation \ref{eqn:matrix1} we obtain: 
$$ \begin{bmatrix} x_{n+1}\\ y_{n+1} 	
\end{bmatrix}=
A\begin{bmatrix} u_{n+1}\\ v_{n+1} 	
\end{bmatrix} +  \begin{bmatrix} 0 \\-1/2
 	\end{bmatrix} = 
	A\begin{bmatrix}
		3 & 2 \\ 4 & 3 
	\end{bmatrix}\begin{bmatrix} u_{n}\\ v_{n} 	
\end{bmatrix} + \begin{bmatrix} 0 \\-1/2
 	\end{bmatrix}$$

$$ = 
	A\begin{bmatrix}
		3 & 2 \\ 4 & 3 
	\end{bmatrix}A^{-1}\left(A\begin{bmatrix} u_{n}\\ v_{n} 	
\end{bmatrix} + \begin{bmatrix} 0 \\ -1/2 	
\end{bmatrix} - \begin{bmatrix} 0 \\ -1/2 	
\end{bmatrix}\right) + \begin{bmatrix} 0 \\-1/2
 	\end{bmatrix}$$

$$ = 
	A\begin{bmatrix}
		3 & 2 \\ 4 & 3 
	\end{bmatrix}A^{-1}\left(\begin{bmatrix} x_{n}\\ y_{n} 	
\end{bmatrix} - \begin{bmatrix} 0 \\ -1/2 	
\end{bmatrix}\right) + \begin{bmatrix} 0 \\-1/2
 	\end{bmatrix}$$

$$= 
	\begin{bmatrix}
	-1 & 2 \\ -4 & 7 
	\end{bmatrix}
	\begin{bmatrix}
	y_{n} \\ x_{n} 	
	\end{bmatrix}
 + \begin{bmatrix} 1 \\ 3
 	\end{bmatrix}. 
	$$
We have thus shown that \emph{if} $B_1(D)= 2$ then $D$ must be of the desired form.  However, it remains to show that $D$ of this form indeed have $\pi(D) = B(D)$.  We prove this in the following Lemma.
\end{proof}
\begin{Lemma}\label{lemma with Bs}
Fix $n \geq 1$ and set $D = \{0, x_n, y_n, y_n+1\}$ with the notation from the previous theorem, then $B(D) = \pi(D) = \{1, 2, y_n - 2x_n+1, y_n - 2x_n\}.$
\end{Lemma}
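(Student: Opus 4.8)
The plan is to exploit the fact that, by construction, $(x_n,y_n)$ arises from the $n$th solution of the Pell equation $v^2=2u^2+1$ via the \emph{invertible} substitution introduced in the proof of Theorem~\ref{inf family with recursion}. Running that chain of manipulations backwards will show that $(x_n,y_n)$ satisfies the single polynomial identity
\begin{equation}\label{eqn:starplan}
2(y_n-x_n)(y_n+1-x_n) = y_n(y_n+1),
\end{equation}
and \eqref{eqn:starplan} alone will force $\pi_1(D),\pi_2(D),\pi_3(D)$ to be the integers $2$, $y_n-2x_n+1$, $y_n-2x_n$. Since $\pi_0(D)=1$, the vector $\pi(D)$ is then automatically a primitive element of $\ZZ^4$, so $B(D)=\pi(D)$ by the definition of $B(D)$, which is exactly the claim.

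First I would prove \eqref{eqn:starplan}. Put $u_n:=2x_n$ and $v_n:=2(y_n-2x_n)+1$, so that $x_n=u_n/2$ and $y_n=u_n+(v_n-1)/2$; this is precisely the change of variables of Theorem~\ref{inf family with recursion}, now run in reverse, and it is invertible. I claim $v_n^2-2u_n^2=1$ for all $n\geq 1$, by induction on $n$. The base case $n=1$ gives $x_1=1$, $y_1=3$, hence $(u_1,v_1)=(2,3)$ and $9-8=1$. For the inductive step, a short computation — the matrix identities in the proof of Theorem~\ref{inf family with recursion} read backwards — shows that the recurrence for $(x_n,y_n)$ corresponds under this substitution to the map $(u,v)\mapsto(3u+2v,\,4u+3v)$, i.e. multiplication by $3+2\sqrt2$ on $v+u\sqrt2$; since $(3+2\sqrt2)(3-2\sqrt2)=1$, this map preserves the form $v^2-2u^2$, so $v_{n+1}^2-2u_{n+1}^2=v_n^2-2u_n^2=1$. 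Unwinding, $v_n^2-2u_n^2=1$ reads $(2y_n-4x_n+1)^2=2(2x_n)^2+1$, which expands and simplifies to $y_n^2+y_n-4x_ny_n=2x_n-2x_n^2$, i.e. to \eqref{eqn:starplan}.

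Next I would compute $\pi(D)$ for $D=\{0,x_n,y_n,y_n+1\}$ from the codimension-$3$ expressions for $\pi_1,\pi_2,\pi_3$ recorded in Section~3, using that here the largest gap is $z-y_n=1$. From \eqref{eqn:starplan} directly, $\pi_1(D)=\dfrac{y_n(y_n+1)}{(y_n-x_n)(y_n+1-x_n)}=2$. Moreover, expanding and regrouping \eqref{eqn:starplan} shows it is equivalent to each of the identities $x_n(y_n+1)=(y_n-2x_n+1)(y_n-x_n)$ and $x_ny_n=(y_n-2x_n)(y_n+1-x_n)$ (both reduce to $y_n^2+y_n-4x_ny_n+2x_n^2-2x_n=0$), whence $\pi_2(D)=\dfrac{x_n(y_n+1)}{y_n-x_n}=y_n-2x_n+1$ and $\pi_3(D)=\dfrac{x_ny_n}{y_n+1-x_n}=y_n-2x_n$, both integers. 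Therefore $\pi(D)=(1,\,2,\,y_n-2x_n+1,\,y_n-2x_n)\in\ZZ^4$, and because its $0$th coordinate is $1$ its coordinates have gcd $1$; hence $\pi(D)$ is already the smallest integer point on its ray, i.e. $B(D)=\pi(D)$. Along the way one also checks, by the same induction, that $0<x_n<y_n$, so that $D$ really is a degree sequence; this was already noted in Theorem~\ref{inf family with recursion}.

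The one place requiring genuine care is establishing \eqref{eqn:starplan}: Theorem~\ref{inf family with recursion} only gives the implication ``$B_1(D)=2\Rightarrow D$ has this recursive shape,'' so here one must verify the converse — that the recursively defined points actually satisfy the quadratic equation. I would handle this either by checking that every step of the earlier argument (the substitution $z=y+1$, the two completions of the square, and the linear changes $w=y-2x$, $v=2w+1$, $u=2x$) is reversible, or, more bluntly, by simply expanding $v_n^2-2u_n^2=1$ as above. Everything after \eqref{eqn:starplan} is routine algebra plus the trivial observation that a vector with a coordinate equal to $1$ is primitive.
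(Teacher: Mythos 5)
Your proof is correct and follows essentially the same strategy as the paper's. Both hinge on showing that the recursively defined $(x_n,y_n)$ satisfy the quadratic identity $y^2+y-4xy=2x-2x^2$ (your \eqref{eqn:starplan}, the paper's Equation~\ref{eqn:pi1is2}), then deducing the integrality of $\pi_2$ and $\pi_3$ and concluding $B(D)=\pi(D)$ because the first coordinate of $\pi(D)$ is $1$.

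The differences are small but worth noting. First, you spend real effort establishing \eqref{eqn:starplan} by induction on the Pell solutions (base case $(u_1,v_1)=(2,3)$, step given by multiplication by $3+2\sqrt2$), explicitly addressing that Theorem~\ref{inf family with recursion} only proved the forward implication. The paper handles this more tersely — it simply asserts that the earlier computation already established the equivalence, which is true since every substitution ($z=y+1$, $w=y-2x$, $v=2w+1$, $u=2x$) is reversible, but you are right that this is the one spot requiring care, and your induction makes that step airtight. Second, the paper uses the alternating-sum relation $1-\pi_1+\pi_2-\pi_3=0$ to get $\pi_3$ for free once $\pi_1=2$ and $\pi_2=y-2x+1$ are known, whereas you verify $\pi_3=y-2x$ by a direct calculation showing it too is equivalent to \eqref{eqn:starplan}. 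Both are fine; the alternating-sum shortcut is slightly cleaner, while your direct check is a useful redundancy. Your closing remark about verifying $0<x_n<y_n$ so that $D$ is actually a degree sequence is a sensible sanity check that the paper leaves implicit.
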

\begin{proof}
For simplicity, we will drop the subscripts from $x,y$.  In the proof of the previous Theorem we have already shown that $\pi_1(D) = 2$ or that 
\begin{equation}\label{eqn:pi1is2}
y^2 +y - 4xy = -2x^2 + 2x.	
\end{equation}

It remains to show that $\pi_2(D), \pi_3(D) \in \ZZ$.  Since
$$1 - \pi_1(D) + \pi_2(D) - \pi_3(D) = 0$$
the result will follow once we prove that $\pi_2(D) = y - 2x + 1$, or that
$$\frac{x(y+1)}{(y-x)\cdot 1} = y - 2x + 1.$$
Which is equivalent to Equation \ref{eqn:pi1is2}.
\end{proof}

\subsection{Extension to Higher Codimension}\label{subsection3.1}

We conclude this section with a brief discussion of what can (and cannot) be extended to arbitrary codimension.  In general if one wants to search for degree sequences $D$ with $B_i(D) = \alpha$, and $B_0(D) = L$ then one will solve for integer solutions to some diophantine equation coming from the Herzog-K\"uhl equations. The good news is that finding the rational solutions can be done using the same technique that we did at the outset of this section. 

\begin{Theorem}\label{thm:generalprojectionproof}
Suppose that $D = \{0, d_1, d_2, \ldots, d_c\}$. Fix an index $i$.  The condition that $B_i(D) = \alpha$, and $B_0(D) = L$ for some positive integers $\alpha, L$ is equivalent to finding integer solutions $x_1 < x_2, \ldots< x_c$ to the equation:
$$L x_1\cdots \widehat{x_i}\cdots x_c = \alpha (x_i - x_1)\cdots (x_i- x_{i-1})\widehat{(x_i-x_i)}(x_{i+1}-x_i)\cdots (x_c - x_i).$$
The integer solutions $(z_1, \ldots, z_c)$ are all multiples of a rational solution of the form $(q_1, \ldots, q_{c-1}, 1)$.  The solutions to this equation can be parametrized by looking at lines with rational slope through the origin $(0,\ldots, 0, 1)$. Each such line meets the variety defined by the equation at in the origin of degree $c-2$.
\end{Theorem}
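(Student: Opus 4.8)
The statement packages two essentially independent facts, and the plan is to establish them in turn: (i) that the conditions $B_i(D)=\alpha$, $B_0(D)=L$ translate, via Herzog--K\"uhl, into the displayed homogeneous Diophantine equation; and (ii) that the solution set of that equation, being a cone, is rationally parametrized by projection from the degenerate point $P=(0,\ldots,0,1)$.

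For (i), write $D=\{0,x_1,\ldots,x_c\}$ with $0<x_1<\cdots<x_c$ and identify $d_j=x_j$. By Equation \ref{eqn:herzog-kuhl} --- more precisely by $\pi_i(D)=\prod_{j\ne i}\frac{x_j}{|x_i-x_j|}$, the product over $1\le j\le c$ --- we have $B_i(D)=B_0(D)\pi_i(D)$. Hence requiring $B_i(D)=\alpha$ and $B_0(D)=L$ forces $\alpha/L=\pi_i(D)$, and since the order $x_1<\cdots<x_c$ fixes the sign of every $x_i-x_j$ exactly as in the displayed product, clearing denominators gives the stated polynomial identity; conversely an integer solution $x_1<\cdots<x_c$ produces a $D$ with $\pi_i(D)=\alpha/L$. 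I would flag one point of imprecision here: the equation really pins down the \emph{ratio} $B_i(D)/B_0(D)$, and that $L$ equals $B_0(D)$ exactly is automatic in the applications that matter (e.g.\ when $B_1(D)=2$, by Proposition \ref{prop:Lis1ifB1istwo}) but in general also involves the divisibility carried by the other $\pi_j$.

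For (ii), note first that both sides of the equation are products of $c-1$ linear forms in $x_1,\ldots,x_c$, hence homogeneous of degree $c-1$; this is the scaling invariance $\pi_i(\lambda D)=\pi_i(D)$ recorded in Section \ref{Sec:Back}. So the solution set is a cone, every solution is a rational multiple of one with $x_c=1$ (a genuine degree sequence has $x_c=d_c>0$), and clearing denominators of such a rational solution returns the integer solutions; it thus suffices to parametrize the affine hypersurface $V$ cut out after the substitution $x_c=1$, in the coordinates $x_1,\ldots,x_{c-1}$. Assume $i\ne c$ (the remaining case goes the same way after projecting from a suitable degenerate point). Its defining polynomial $F$ has top-degree part $\pm\alpha x_i\prod_{j\ne i,\,j\le c-1}(x_i-x_j)$, which is nonzero, so $\deg V=c-1$. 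The key claim is that $P=(0,\ldots,0)$ lies on $V$ with multiplicity $c-2$: the two products $\prod_{j\ne i,\,j\le c-1}x_j$ and $\prod_{j\ne i,\,j\le c-1}(x_i-x_j)$ are each homogeneous of degree $c-2$, so the initial form of $F$ at $P$ is $L\prod_{j\ne i,\,j\le c-1}x_j-\alpha\prod_{j\ne i,\,j\le c-1}\pm(x_i-x_j)$, which is nonzero because the first summand is a monomial free of $x_i$ while the second involves $x_i^{c-2}$. Consequently, substituting a line $x_j=m_jt$ through $P$ makes the equation read $t^{c-2}\bigl(A(m)+B(m)t\bigr)=0$, with $A(m)=L\prod m_j-\alpha\prod\pm(m_i-m_j)$ and $B(m)=\alpha m_i\prod\pm(m_i-m_j)$; for a generic rational direction (namely $m_i\ne 0$, $m_i\ne m_j$, $A(m)\ne 0$) the root $t=0$ has multiplicity exactly $c-2$ and there is exactly one further, rational, root $t=-A(m)/B(m)$, while conversely every rational point of $V$ other than $P$ lies on a unique such line. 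This is exactly the content of the last two sentences of the statement, and it specializes to the computation $x\bigl((2-2m)x+m-2\bigr)=0$ carried out for $c=3$ above, where $c-2=1$.

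The projection step itself is soft; I expect the only real care to be the bookkeeping in the previous paragraph --- that the initial form of $F$ at $P$ is nonzero of degree exactly $c-2$, that $\deg V=c-1$, and the boundary index $i=c$ --- together with the imprecision about $B_0(D)=L$ flagged above. The genuinely hard part, which this theorem deliberately sidesteps, is extracting the \emph{integer} solutions from this rational parametrization: for $c=3$ a fortunate collapse (Theorem \ref{Thm:AllarePell}) reduces the problem to Pell's equation, whereas for $c\ge 4$ one is left counting integral points on a Diophantine variety of degree $\ge 3$, for which no comparably clean description is known --- precisely the obstacle behind Program \ref{Program:1}.
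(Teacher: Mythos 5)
Your proof follows essentially the same route as the paper's: parametrize rational lines through $(0,\ldots,0,1)$, substitute into the homogeneous degree-$(c-1)$ equation, factor out $t^{c-2}$, and solve the remaining linear equation for the second intersection point. You additionally supply details the paper leaves implicit --- that the multiplicity at the origin is \emph{exactly} $c-2$ because the initial form $L\prod_{j\ne i}x_j-\alpha\prod_{j\ne i}\pm(x_i-x_j)$ is nonzero (the first summand is a monomial free of $x_i$ while the second contains $x_i^{c-2}$), and that the base point $(0,\ldots,0,1)$ lies on the hypersurface only when $i<c$ --- and you rightly flag that the displayed equation by itself pins down only the ratio $\alpha/L=\pi_i(D)$ rather than the individual value $B_0(D)=L$.
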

\begin{proof}
	The proof is straightforward.  Let $S$ be the hypersurface defined by the equation in the statement of the theorem (with $x_c = 1$).  Parametrize a rational line $r(t)$ through the origin in $\QQ^{c-1}$.  By this we mean a parametrization $$r(t) = (m_1t, \ldots, m_{c-1}t)$$ for some rational numbers $m_i$.  Any point with rational coordinates on $S$ must lie on some such line.  To see the converse, suppose that $P$ is a point that lies on some rational line parametrized by $r(t)$.  Then $P$ must be a solution to an equation of degree $c-1$ in $t$ that is obtained by substituting the coordinates of $r(t)$ into the equation for $S$, which is: 
	$$L m_1t\cdots \widehat{m_it}\cdots m_{c-1}t = $$
	$$\alpha (m_it - m_1t)\cdots (m_it- m_{i-1}t)\widehat{(m_it-m_it)}(m_{i+1}t-m_it)\cdots(m_{c-1}t-m_it)(1 - m_it).$$
One sees that $t^{c-2}$ is a factor of both sides, and thus the origin is a root of multiplicity $c-2$. After dividing through by this factor, solving for $t$ (and thus finding the coordinates of $P$) simply requires solving a linear equation.  Thus the coordinates of $P$ must be rational. 
\end{proof}

\begin{Remark}
Recall that when we did these calculations for $c=3$ we obtained rational parametrizations that depended on a slope $m = a/b$, so there were two parameters $a$ and $b$:
$$\left(1+\frac{a}{2(b-a)}, \frac{a(2b-a)}{2b(b-a)},1\right).$$ 
The content of Theorem \ref{thm:generalprojectionproof} says that this can be done in any codimension.  However, if one is interested in {\bf integer} solutions, one must then clear denominators.  In the codimension $3$ case we were quite lucky that the fractions were essentially in lowest terms so that clearing fractions could be done just by multiplying by the denominators.  This was because of the helpful assumption that we could assume $\gcd(a,b) =  1$, since we may as well take the fraction $m = a/b$ to be in lowest terms.  In higher codimension, we will have many fractions $m_i = a_i/b_i$ and while we can assume that $\gcd(a_i,b_i) = 1$, in general it may happen that for instance $a_3$ and $b_2$ share a factor.  The following example illustrates what can happen. 
\end{Remark}

\begin{Example}
If $c=4$ and we want to find those degree sequences with $B_2(D) = 4$ and $B_0(D) = 1$ then we will solve the equation 
$$yzw = 2(y-x)(z-x)(w-x).$$
We set $w = 1$ and  parametrizing with $r(t) = (m_1t,m_2t,m_3t)$.  Without loss of generality, since we are interested in nonzero solutions, we can reparametrize by setting any $m_i = 1$.  If we let $m_i = a_i/b_i$ then we get:   
$$(\frac{a_2a_3}{b_2b_3})t^2 = 2(\frac{a_2}{b_2}t-t)(\frac{a_3}{b_3}t-t)(1-t)$$
$$a_2a_3 = 2(a_2 - b_2)(a_3 - b_3)(1-t)$$
$$t = 1 - \frac{a_2a_3}{2(a_2 - b_2)(a_3 - b_3)}.$$
Then our solutions are 
$$(x,y,z,w) = 
\left(t, \frac{a_2}{b_2}\cdot t, \frac{a_3}{b_3}\cdot t, 1\right)=$$
$$
\left(
1 - \frac{a_2a_3}{2(a_2 - b_2)(a_3 - b_3)},
\frac{a_2}{b_2} - \frac{a_2^2a_3}{2b_2(a_2 - b_2)(a_3 - b_3)},
\frac{a_3}{b_3} - \frac{a_2a_3^2}{2b_3(a_2 - b_2)(a_3 - b_3)},
1\right) = 
$$
$$
\left(
1 - \frac{a_2a_3}{2(a_2 - b_2)(a_3 - b_3)},
\frac{a_2(a_2a_3-2a_3b_2-2a_2b_3+2b_2b
      _3)}{2b_2(a_2 - b_2)(a_3 - b_3)},
\frac{a_3(a_{2}a_3-2a_{3}b_{2}-2\,a_{2}b_{3}+2b_{2}b_{3})}{2b_3(a_2 - b_2)(a_3 - b_3)},
1\right).
$$
Notice that if $m_2 = 13/3$ and $m_3 = 5/1$ then the first fraction is not in lowest terms because $a_2 - b_2 = 10$ has a factor in common with $a_3$. This is the type of issue that one would have to address to find all integer solutions. 
\end{Example}

\section{Codimension 3 Degree Sequences with $\sum B_i < 12$}
We now turn to study those degree sequences $D$ of codimension $3$ that violate Main Conjecture (2), that is $\sum B(D) < 12$. We will do everything by hand and use the results from the previous section as well.  In Section \ref{Sec:Gor} we will show how such methods might be extended to other cases.

\begin{Lemma}\label{lemma:17}
	For any degree sequence $D$ of codimension $3$, 
	$$\sum B(D) \geq 2B_0(D) + 2B_3(D) + 2.$$
In particular, 
	\begin{itemize}
	\item If $B_0(D) \geq 4$ then $\sum B(D) \geq 12$;
	\item If $B_0(D) = 3$ then $\sum B(D) \geq 12$ unless $B_3(D) = 1$;
	\item If $B_0(D) = 2$ then $\sum B(D) \geq 12$ unless $B(D) = \{2,3,3,2\}$  or $B_3(D) = 1$.
	\end{itemize}
Thus apart from $B(D) = \{2,3,3,2\}$, any degree sequence with $\sum B(D) < 12$ must satisfy $B_0(D) = 1$ or $B_3(D) = 1$.\end{Lemma}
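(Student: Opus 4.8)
The plan is to derive the displayed inequality directly from the two facts already in hand — the parity relations of Lemma~\ref{lemma:even} and the strict inequality $B_1(D) > B_0(D)$ of Proposition~\ref{prop:Lis1ifB1istwo} — and then to read off the three bulleted statements and the concluding claim by elementary bookkeeping. Write $B = B(D) = (B_0, B_1, B_2, B_3)$; since $D$ is a codimension $3$ degree sequence, all four entries are positive integers, and in particular $B_3 \geq 1$ because $\pi_3(D) = \prod_{j=1}^{2} d_j/(d_3 - d_j) > 0$, while $B_0 \geq 1$ by construction of $B(D)$.

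For the main inequality I would first invoke Lemma~\ref{lemma:even}(3), which for $c = 3$ reads $\sum B(D) = 2 s_{odd} = 2(B_1 + B_3)$. Proposition~\ref{prop:Lis1ifB1istwo}(2) gives $B_1 > B_0$, and since these are integers $B_1 \geq B_0 + 1$. Substituting yields $\sum B(D) = 2B_1 + 2B_3 \geq 2B_0 + 2B_3 + 2$, which is the asserted bound.

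The bullets then follow by cases on $B_0$, using $B_0, B_3 \geq 1$. If $B_0 \geq 4$ then $\sum B(D) \geq 8 + 2 + 2 = 12$. If $B_0 = 3$ then $\sum B(D) \geq 8 + 2B_3$, which is $\geq 12$ unless $B_3 = 1$. If $B_0 = 2$ then $\sum B(D) \geq 6 + 2B_3$, which is $\geq 12$ unless $B_3 \in \{1,2\}$; the value $B_3 = 1$ is a listed exception, so the one remaining case is $B_0 = B_3 = 2$. This is the only step that takes a moment: here Lemma~\ref{lemma:even}(1) gives $B_0 - B_1 + B_2 - B_3 = 0$, hence $B_2 - B_1 = B_3 - B_0 = 0$, so $B_1 = B_2$; combined with $B_1 \geq B_0 + 1 = 3$ this forces $B = (2, B_1, B_1, 2)$ with $\sum B(D) = 4 + 2B_1$, which is $< 12$ exactly when $B_1 = 3$, i.e. $B = \{2,3,3,2\}$.

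Finally, to get the concluding sentence I would assemble these observations: if $\sum B(D) < 12$ and $B(D) \neq \{2,3,3,2\}$, then $B_0 \geq 4$ is impossible, $B_0 = 3$ forces $B_3 = 1$, $B_0 = 2$ forces $B_3 = 1$ (the alternative $\{2,3,3,2\}$ having been excluded), and $B_0 = 1$ requires no argument; so in every case $B_0 = 1$ or $B_3 = 1$. The only ``hard part'' is noticing that the arithmetic of Lemma~\ref{lemma:even} already forces the symmetry $B_1 = B_2$ in the borderline case $B_0 = B_3 = 2$, which is precisely what isolates the sporadic table $\{2,3,3,2\}$; everything else is immediate from the two cited results.
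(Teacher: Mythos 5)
Your proof is correct. The main inequality is established by a genuinely different (and equally short) route: you invoke Lemma~\ref{lemma:even}(3), which for $c=3$ gives $\sum B(D) = 2(B_1 + B_3)$, and then need only the single application of Proposition~\ref{prop:Lis1ifB1istwo}(2) to bound $B_1 \geq B_0 + 1$. The paper instead applies Proposition~\ref{prop:Lis1ifB1istwo}(2) twice, once to $D$ and once to the dual $D^*$, obtaining $B_1 \geq B_0 + 1$ and $B_2 \geq B_3 + 1$, and then sums the four entries directly. The two arguments are essentially dual forms of the same observation — both ultimately rest on the alternating-sum identity of Lemma~\ref{lemma:even}(1), in one case packaged as the $s_{\mathrm{odd}}$ relation and in the other as the symmetry of $B_1 > B_0$ under dualization — but yours is marginally more economical in that it uses the proposition only once and lets the parity identity do the rest. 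Your case analysis for the bullets matches the paper's (the paper compresses it to ``The results follow''), and your handling of the borderline $B_0 = B_3 = 2$ case via $B_1 = B_2 \geq 3$ correctly isolates the single exceptional sequence $\{2,3,3,2\}$.
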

\begin{proof}
By Proposition \ref{prop:Lis1ifB1istwo} applied to $D$ and its dual we have that $B_1(D) \geq B_0(D) + 1$ and $B_2(D) \geq B_3(D)+1$.  The results follow. 
\end{proof}

\begin{Theorem}\label{theorem for equal to 10} Let $D$ be degree sequence of codimension $3$. The only degree sequences $D$ with $\sum B(D) < 12$ are in the table below: 

$$\begin{array}{|c|c|c|}\hline 
D & B(D) & \mbox{sum} \\ \hline
\{0,1,3,4 \} & \{1,2,2,1 \}  & 6\\	 \hline
\{0,1,2,3 \} & \{1,3,3,1 \}  & 8\\ \hline
\{0,1,5,6 \} & \{2,3,3,2 \}  & 10\\ \hline
\{0,3,5,8\} & \{1,4,4,1 \}  & 10 \\ \hline
\end{array}$$
\end{Theorem}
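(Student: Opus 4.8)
The plan is to combine the parity of $\sum B(D)$, the numerical trichotomy of Lemma~\ref{lemma:17}, and two elementary observations about the rational vector $\pi(D)$, reducing the problem to finitely many candidate betti vectors, each of which is then cut down to a single degree sequence by a linear Diophantine condition. Throughout write $D=\{0,x,y,z\}$.

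First, by Lemma~\ref{lemma:even}(3) the number $\sum B(D)$ is even, so ``$\sum B(D)<12$'' means ``$\sum B(D)\le 10$''. I will use three facts. \textbf{(i) Duality is harmless:} a short computation with the Herzog--K\"uhl products shows that $\pi_{3-i}(D)$ is an $i$-independent rational multiple of $\pi_{i}(D^{*})$, so $B(D^{*})$ is the reversal of $B(D)$ (as in the displayed example preceding Lemma~\ref{lemma:even}); in particular $\sum B(D^{*})=\sum B(D)$, and since the four sequences in the table are all self-dual, it suffices to classify the $D$ with $\sum B(D)\le 10$ up to replacing $D$ by $D^{*}$. \textbf{(ii) $\pi_{1}\pi_{2}\pi_{3}$ is a rational square:} cancelling the Herzog--K\"uhl products term by term gives
$$\pi_{1}(D)\,\pi_{2}(D)\,\pi_{3}(D)=\left(\frac{xyz}{(y-x)(z-x)(z-y)}\right)^{2}.$$
\textbf{(iii) $\pi_{3}(D)=1$ iff $z=x+y$:} the equation $\tfrac{xy}{(z-x)(z-y)}=1$ rearranges to $z^{2}=z(x+y)$.

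Now apply Lemma~\ref{lemma:17}: apart from the single exceptional value $B(D)=\{2,3,3,2\}$, a sequence with $\sum B(D)<12$ has $B_{0}(D)=1$ or $B_{3}(D)=1$, and by (i) we may assume $B_{0}(D)=1$, so $B(D)=\pi(D)\in\ZZ^{4}$. By Lemma~\ref{lemma:even}(1), $\pi_{2}=\pi_{1}+\pi_{3}-1$, whence $\sum B(D)=2(\pi_{1}+\pi_{3})$; since $\pi_{1}$ is an integer with $\pi_{1}>1$ (Proposition~\ref{prop:Lis1ifB1istwo}(1)) and $\pi_{3}\ge 1$, the bound $\pi_{1}+\pi_{3}\le 5$ leaves exactly the six candidates $(1,2,2,1)$, $(1,2,3,2)$, $(1,2,4,3)$, $(1,3,3,1)$, $(1,3,4,2)$, $(1,4,4,1)$. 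Observation (ii) says $\pi_{1}\pi_{2}\pi_{3}$ must be a perfect square, which rules out $(1,2,3,2)$, $(1,2,4,3)$, $(1,3,4,2)$ at once (their products $12,24,24$ are not squares). Each of the three survivors has $\pi_{3}=1$, so by (iii) $z=x+y$; then $\pi_{1}(D)=\tfrac{x+y}{y-x}$ is an integer, and from $\gcd(x,y)=1$ one gets $y-x\mid\gcd(2x,2y)=2$, i.e.\ $y-x\in\{1,2\}$. Plugging in, $\pi_{1}$ equals $2x+1$ (when $y-x=1$) or $x+1$ (when $y-x=2$, where $x$ must be odd), so matching $\pi_{1}\in\{2,3,4\}$ pins $D$ down uniquely, giving $\{0,1,3,4\}$, $\{0,1,2,3\}$, $\{0,3,5,8\}$ for the three survivors in turn.

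Finally, for the exceptional case $B(D)=\{2,3,3,2\}$ we have $\pi(D)=(1,\tfrac32,\tfrac32,1)$, so $\pi_{3}(D)=1$, hence $z=x+y$ by (iii), and $\tfrac{x+y}{y-x}=\tfrac32$ gives $y=5x$; non-degeneracy $\gcd(x,5x)=x=1$ forces $D=\{0,1,5,6\}$. Assembling the four sequences (with pairwise distinct betti vectors) reproduces the table, and by (i) the case $B_{3}(D)=1$ contributes only duals of sequences already found, which are the same sequences. I do not anticipate a genuine obstacle here: the heavy lifting is already in Lemma~\ref{lemma:17}, and the rest is a finite, essentially linear check. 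The one point that repays a little care is observation (ii): it is what disposes of $(1,3,4,2)$---the only candidate not handled either by the classification of $B_{1}=2$ sequences in Theorem~\ref{inf family with recursion} or by the ``$z=x+y$'' reduction---without having to analyze a fresh cubic Diophantine equation.
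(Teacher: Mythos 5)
Your proof is correct, and it takes a genuinely different and more self-contained route than the paper's. The shared skeleton is the same: Lemma~\ref{lemma:17} reduces to $B_0(D)=1$ (plus the $\{2,3,3,2\}$ exception), the alternating-sum identity gives $\sum B(D)=2(\pi_1+\pi_3)$, and one enumerates the six integer candidates. Where you diverge is in how the three ``bad'' candidates $(1,2,3,2)$, $(1,2,4,3)$, $(1,3,4,2)$ are killed. The paper dispatches $(1,2,3,2)$ and $(1,2,4,3)$ by invoking the Pell classification of Theorem~\ref{inf family with recursion}/Lemma~\ref{lemma with Bs}, and handles $(1,3,4,2)$ in Case~1 with a bespoke $2$-adic/parity chase. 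Your observation (ii) --- that
\[
\pi_1(D)\pi_2(D)\pi_3(D)=\left(\frac{xyz}{(y-x)(z-x)(z-y)}\right)^2
\]
is a rational square, hence a perfect square whenever $B_0(D)=1$ --- disposes of all three at once since $12$ and $24$ are not squares. This makes your argument independent of Section~3 (no Pell equation needed), and it gives a cleaner explanation of why $(1,3,4,2)$ cannot occur than the paper's case analysis. The remaining three survivors and the $\{2,3,3,2\}$ exception are handled by both proofs via the same $\pi_3=1\Leftrightarrow z=x+y$ reduction; your added step $y-x\mid\gcd(2x,2y)=2$ streamlines the pinning-down of $x$. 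All the individual claims check out: $\pi_3\ge 1$ is valid because $B_0=1$ forces $\pi_3=B_3$ to be a positive integer; non-degeneracy of $D^*$ follows from non-degeneracy of $D$ so the duality reduction is safe; and the four output sequences are indeed self-dual, so the $B_3=1$ branch produces nothing new. The square observation in particular seems worth recording, as it may apply in higher codimension as well: the product $\pi_1\cdots\pi_c$ is always a $(c-1)$st power of a rational number.
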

\begin{proof}
Dualizing if necessary, we will assume that $B_0(D) \leq B_3(D)$.  In other words, we put the smaller number first.  By the previous lemma, we may assume $B_0(D) \leq 2$ and if $B_0(D) = 2$ then $B(D) = \{2,3,3,2\}$.  

Suppose that $B_0(D) = 1$, i.e. that $B(D) = \{1, a,b,c\}$ for some $a,b,c$ with $c \leq 3$ by Lemma \ref{lemma:17}. Since the alternating sum of the entries of $B(D)$ is zero, we must have that 
$$B(D) = \{ 1, a, a+c-1, c\}.$$
Analyzing cases for $c \in \{1,2,3\}$ yields the following possibility for $B(D)$ with $\sum B(D) < 12:$
$$\{1,2,2,1\}, \{1,3,3,1\}, \{1,4,4,1\}, \{1,2,3,2\},\{1,3,4,2\},\{1,2,4,3\}$$
Note that Theorem \ref{Thm:AllarePell} characterizes those $D$ with $B_1(D) = 2$, and we note that $\{1,2,3,2\},\{1,2,4,3\}$ do not occur, and that $\{1,2,2,1\}$ occurs if and only if $D = \{0,1,3,4\}$. 

Thus up to duality, the only sequences $B(D)$ we need to study are $\{1,3,3,1\}$, $\{1,4,4,1\}$, $\{1,3,4,2\}$, and $\{2,3,3,2\}$.  We proceed in cases. In all of these we will write $D = \{0, x, y, z\}$ and suppose that $D$ is non-degenerate, i.e. $\gcd(x,y,z) =1$. 

\medskip 

\noindent {\bf Case 1: The sequence $\{1,3,4,2\}$ does not occur as $B(D)$:}

Suppose $B(D) = \{1,3,4,2\}$. Then we have the following equations:
$$yz = 3(y-x)(z-x),$$ 
$$xz = 4(y-x)(z-y),$$
$$xy = 2(z-x)(z-y).$$
The middle equation implies that $xz$ is divisible by $4$. 

Suppose that $x$ and $z$ were both even. Then $y$ must be odd since $D$ is non-degenerate.  But this means that $(y-x)(z-y)$ is odd and thus that $(xz)/4$ is odd, so that $x/2$ and $z/2$ are both odd.  However, in the last equation the factor $2(z-x)$ will be a multiple of $4$, which is impossible since the left hand side is a multiple of $2$ but not a multiple of $4$.  

Thus we must have that either $x$ is a multiple of $4$ and $z$ is odd, or vice versa.  In particular, we have $z-x$ is odd.  
Dividing the second equation by the third and simplifying yields
$$z(z-x) = 2y(y-x).$$
Thus $z$ must be even and $x$ odd.  Suppose that $z = 2^qk$ for $k$ odd. 
The top equation then forces $y-x$ to be a multiple of $2^q$, say $y-x=2^q\ell$. Then the middle equation becomes:  
$$2^qkx = 4\cdot 2^q \ell (z-y)$$
which is a contradiction since this would imply that $kx$ (an odd number) is divisible by $4$.  Hence $\{1,3,4,2\}$ does not occur as $B(D)$.

\medskip

\medskip 

\noindent {\bf Case 2: $D$ with $B(D) =\{1,a,a,1\}$:} 

Suppose $B(D) = \{1,a,a,1\}$ for $a \in \{3,4\}$. Then we have the following equations:
$$yz = a(y-x)(z-x),$$ 
$$xz = a(y-x)(z-y),$$
$$xy =  (z-x)(z-y).$$
The last equation implies that $z(z-x-y) = 0,$ so $z = x+y$.  The other two equations then both become equivalent to $x+y=a(y-x)$ which implies that $y = (a+1)x/(a-1)$.  

If $a=3$ then this implies $D$ must be of the form $\{0, x, 2x, 3x\}$ and non-degeneracy forces $D = \{0,1,2,3\}$.

If $a=4$ then this implies $D$ must be of the form $\{0, x, \frac53 x, \frac83 x\}$ and non-degeneracy forces $D = \{0,3,5,8\}$.

\medskip 

\noindent {\bf Case 3: $D$ with $B(D) = \{2,3,3,2\}$:}

Suppose $B(D) = \{2,3,3,2\}$. Then we have the following equations:
$$2yz = 3(y-x)(z-x),$$ 
$$2xz = 3(y-x)(z-y),$$
$$2xy = 2(z-x)(z-y).$$
Note the factor of 2 on the left because $B_0 = 2$. 

The last equation implies that 
$2(z^2-xz-yz) = 0$, so $z = x+y$.  Substituting this into the other equations we get $2(x+y) = 3(y-x)$ or $y = 5x$ so $D = \{0,x,5x,6x\}$ and thus since $D$ is nondegenerate, $D = \{0,1,5,6\}$.

\medskip 

These calculations conclude the proof that the table in the statement of the Theorem is complete.
\end{proof}
\begin{proof}[Proof of Theorem \ref{thm1}:]
We close by completing the proof of Theorem \ref{thm1}. All that remains is to show that for each arithmetic obstruction we have found where $B(D)$ violates a conjectured bound that in the equation $\beta(M) = LB(D)$ we must have $L \geq 2$.  

Note that the $B(D)$ violating conjecture $1$ all had the form $\{1, 2, B_2, B_3\}$ or $\{B_0, B_1, 2, 1\}$.  Since we are in codimension 3, the Krull altitude theorem rules out the first sequence as a possible betti sequence of a module $M$.  The same argument applied to the dual of $M$ rules out the second. 

Now consider the sequences $B(D)$ from Theorem \ref{theorem for equal to 10}.  The sequence $\{1,2,2,1\}$ is not the betti sequence of a module by the Krull altitude theorem.  The second is the betti sequence of a complete intersection, and so it is allowed by Conjecture 2.  The third violates the general fact that $\beta_1(M) \geq \beta_0(M) + (c-1)$ (see e.g. Theorem 2.1 in \cite{BERemarks}).  And finally, the sequence $\{1,4,4,1\}$ is not the betti sequence of a module because an almost complete intersection is never Gorenstein. Thus $L\geq 2$ in all of these cases. 
\end{proof}

\section{Gorenstein Rings in Codimension 5 and 6}\label{Sec:Gor}
In this section will we prove Theorem \ref{Thm:GorCodim5}.
Heretofore we have been interested in finding a complete solution to Program \ref{Program:1}.  In other words, we wanted to find all arithmetic obstructions.  However, if one is interested in proving new cases of the Main Conjectures, we may as well throw into the mix the necessary conditions imposed by the Krull altitude theorem, Kunz's Theorem, and others.  With this approach we the next open case of the conjecture is just barely within reach.  

\begin{proof}[Proof of Theorem \ref{Thm:GorCodim5}]
We start by proving the claims in codimension $5$: 

Suppose that $\beta = \{1,a,b,b,a,1\}$ is the betti sequence of a Gorenstein ring $M = R/I$, and assume that $M$ is not isomorphic to a complete intersection. To prove Main Conjecture 1, we need to show that $a \geq 5$ and $b\geq 10$.  The first is simply the Krull altitude theorem.  Since almost complete intersections are never Gorenstein by a Theorem of Kunz \cite{Kunz}, we may assume that $a\geq 7$.  
Using the Syzygy Theorem of Evans-Griffith \cite{EG}, we see that the rank of the 3rd map in the resolution must be at least 3.  Since the second map evidently has rank $a-1$ we deduce that $b \geq 3 + (a-1)$ so $b \geq 2+a$.  Thus if $a \geq 8$ the Main Conjecture 1 holds. If $a = 7$, then the only betti sequence to rule out is $\{1, 7, 9, 9, 7,1\}$ which will be ruled out below as it would also violate Main Conjecture 2. We now focus on finding all choices of $a,b$ that would violate this latter conjecture. 

Since $b\geq 2+a$, the sum of the betti numbers is at least $6+4a$.  So we only need to consider the cases when $a\in \{7,8,9,10\}$.  A complete list of possible $(a,b)$ is below:
	$$(7,9), (7,10),(7,11),(7,12),(7,13),(7,14),(7,15),(8,10),$$ 
	$$(8,11), (8,12), (8,13),(8,14),(9,11),(9,12),(9,13),(10,12)$$
		
	Since $R$ is Gorenstein and pure its degree sequence is of the form $\{0,x,y,z,z+y-x, y+z\}$ for $0< x< y< z.$  Since $\beta_0(R) = 1$ we must have that $B(D) = \beta(D)$ and thus we have 
\begin{eqnarray*}
	yz(z+y-x)(y+z) &=& a\cdot (y-x)(z-x)(z+y-2x)(y+z-x) \\
	xz(z+y-x)(y+z) &=& b\cdot (y-x)(z-y)(z-x)z
\end{eqnarray*}	
by the Herzog-K\"uhl equations.
Notice the common factor that can be canceled from each equation.  For each $(a,b)$ pair, these equations define an affine variety $X_{a,b}$ in $\QQ^3$ and we want to determine whether there is an integer point $(x,y,z)$ on $X_{a,b}$ with $0<x<y<z$.  To do this we use Macaulay2 to find the irreducible components for $X_{a,b}$.   Namely we use the \texttt{decompose} command in Macaulay2 to find the components.  It turns out that each variety has $9$ components: $8$ lines and one cubic plane curve. The lines all are defined by equations either setting some coordinate to zero, or setting two coordinates equal to one another.  Those cannot contain a point $(x,y,z)$ with $0<x<y<z$.  The component with a cubic plane curve is defined by a linear form $f$ and a cubic in $y$ and $z$. These components are listed below:
$$\begin{array}{l}
\texttt{ideal}{}\left(14x-15y+3z,20y^{3}-75y^{2}z+204yz^{2}-85z^{3}\right)\\
\texttt{ideal}{}\left(7x-8y+2z,12y^{3}-44y^{2}z+81yz^{2}-33z^{3}\right)\\
\texttt{ideal}{}\left(14x-17y+5z,85y^{3}-306y^{2}z+475yz^{2}-190z^{3}\right)\\
\texttt{ideal}{}\left(7x-9y+3z,33y^{3}-117y^{2}z+165yz^{2}-65z^{3}\right)\\
\texttt{ideal}{}\left(14x-19y+7z,190y^{3}-665y^{2}z+882yz^{2}-343z^{3}\right)\\
\texttt{ideal}{}\left(7x-10y+4z,65y^{3}-225y^{2}z+286yz^{2}-110z^{3}\right)\\
\texttt{ideal}{}\left(14x-21y+9z,343y^{3}-1176y^{2}z+1449yz^{2}-552z^{3}\right)\\
\texttt{ideal}{}\left(16x-17y+3z,51y^{3}-187y^{2}z+513yz^{2}-209z^{3}\right)\\
\texttt{ideal}{}\left(8x-9y+2z,15y^{3}-54y^{2}z+100yz^{2}-40z^{3}\right)\\
\texttt{ideal}{}\left(16x-19y+5z,209y^{3}-741y^{2}z+1155yz^{2}-455z^{3}\right)\\
\texttt{ideal}{}\left(8x-10y+3z,40y^{3}-140y^{2}z+198yz^{2}-77z^{3}\right)\\
\texttt{ideal}{}\left(16x-21y+7z,65y^{3}-225y^{2}z+299yz^{2}-115z^{3}\right)\\
\texttt{ideal}{}\left(18x-19y+3z,95y^{3}-342y^{2}z+945yz^{2}-378z^{3}\right)\\
\texttt{ideal}{}\left(9x-10y+2z,55y^{3}-195y^{2}z+363yz^{2}-143z^{3}\right)\\
\texttt{ideal}{}\left(18x-21y+5z,378y^{3}-1323y^{2}z+2070yz^{2}-805z^{3}\right)\\
\texttt{ideal}{}\left(20x-21y+3z,77y^{3}-273y^{2}z+759yz^{2}-299z^{3}\right)
\end{array}$$

We claim that none of these components have a point $(x,y,z)$ with $z>0$.  If they did, then because the cubics are homogeneous, then after setting $z=1$ the equations would have a rational solution.  One can use the rational root test to complete the argument. 

The proof in codimension $6$ proceeds in a similar way.  We will highlight the differences. The betti sequences to consider here will have the form $\beta = \{1,a,b,2b-2a+2,b,a,1\}$. 
Notice that there are infinitely many pairs $(a,b)$ where $\beta_3 < 20$.  This means that verifying Main Conjecture (1) for $\beta_3$ cannot be done in a with a finite calculation.   For $\beta_1$, note that by the Total Rank Theorem, if $M$ is not isomorphic to a complete intersection, then the sum of the betti numbers must be greater than $2^c = 64$.  Thus we have the $4+4b > 64$ and thus $\beta_2 = b \geq 16 > 15 = {c \choose 2}$.  Note that we have not used at all that $M$ is a pure module, (see Corollary \ref{cor:generalGOR}.)

For Main Conjecture (2), we need to show that the sum of the betti numbers, $4+4b$ is at least $2^6+2^5$, that is that $b \geq 23$. By the previous paragraph and the Syzygy Theorem, we have $b \geq \min(a + 3, 16)$ and $a\geq 8$ by Kunz' theorem. There are then $63$ pairs $(a,b)$ to check.   Again, since $M = R/I$ is a cyclic module, $B(D) = \beta(D)$, and since $R/I$ is Gorenstein we know that $D$ has the form $\{0,x,y,z,2z-y,2z-x,2z\}$ for some integers $0< x< y< z$.  As before we use the Herzog-K\"uhl equations to define varieties $X_{a,b}$. 

All of the 63 varieties $X_{a,b}$ have exactly 5 components containing lines defined by the vanishing of a variable or difference of two variables, and thus do not correspond to valid degree sequences.  After eliminating these components, all but one of the corresponding varieties $X_{a,b}$ has only one remaining component, which contains a degree 4 polynomial in two variables $y,z$.  Applying the rational roots test shows that none of these components contain nonzero integer points.  

The lone exception is $X_{16,20}$ (with potential $\beta = \{1,16, 20, 10, 20, 16, 1\}$) which has $3$ non-linear components: 

$$\{\texttt{ideal}(z,x-y), \texttt{ideal}(z,x+y), \texttt{ideal}(y-2z,x-z), \texttt{ideal}(y-z,x-2z), $$ 
$$\texttt{ideal}(2y-3z,8x^2-16xz+5z^2), \texttt{ideal}(2y-z,8x^2-16xz+5z^2), $$ 
$$\texttt{ideal}(4y^2-8yz+7z^2,8x^2-16xz+15z^2)\}$$
One can still apply the rational roots test to see that these components contain no nontrivial integer points.
\end{proof}
The proof of the previous theorem shows the following more general result, without any assumption about purity:
\begin{Corollary}\label{cor:generalGOR}
	If $R/I$ is any finitely-generated graded Gorenstein algebra of codimension $6$, that is not isomorphic to a complete intersection then $\beta_2(R/I) \geq 16$.
\end{Corollary}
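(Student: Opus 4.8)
The plan is to extract this directly from the codimension-$6$ analysis in the proof of Theorem~\ref{Thm:GorCodim5}, observing that the only input used there beyond the numerology of Gorenstein resolutions is the Total Rank Theorem, and in particular that purity is never invoked.

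First I would record the structural facts about a graded Gorenstein algebra $R/I$ of codimension $6$: being Gorenstein it is Cohen--Macaulay, so by the Auslander--Buchsbaum formula its minimal free resolution has length exactly $6$; and the defining self-duality of the resolution forces the betti numbers to be symmetric, $\beta_i(R/I)=\beta_{6-i}(R/I)$. Since $R/I$ is cyclic, $\beta_0=\beta_6=1$, so we may write $\beta(R/I)=\{1,a,b,\beta_3,b,a,1\}$ with $a=\beta_1(R/I)$ and $b=\beta_2(R/I)$.

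Next I would pin down $\beta_3$ and the total betti sum. Because $I\neq 0$, the module $R/I$ is torsion and hence has rank $0$, so the Euler-characteristic argument used in Lemma~\ref{lemma:even} gives $\sum_i(-1)^i\beta_i(R/I)=0$; combined with the symmetry above this reads $2-2a+2b-\beta_3=0$, so $\beta_3=2b-2a+2$. Substituting, the total sum telescopes to
\[
\sum_{i=0}^{6}\beta_i(R/I)=2+2a+2b+\beta_3=4+4b.
\]
Finally, I would apply the Total Rank Theorem (Walker~\cite{W}, VandeBogert--Walker~\cite{VandeW}), together with its sharpening that equality $\sum_i\beta_i=2^c$ holds if and only if $R/I$ is isomorphic to a complete intersection: since $R/I$ is assumed not to be a complete intersection, $4+4b=\sum_i\beta_i(R/I)>2^6=64$, whence $b=\beta_2(R/I)\geq 16$.

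There is no serious obstacle here: once the Total Rank Theorem is in hand the argument is pure bookkeeping, and the only point requiring a little care is the reduction of the total betti sum to the single linear expression $4+4\beta_2$, which uses both the self-duality (symmetry of the $\beta_i$) and the vanishing of the alternating sum. One should also be careful to invoke the ``equality iff complete intersection'' refinement rather than merely the inequality $\sum\beta_i\ge 2^c$, since it is the strictness of $4+4b>64$ that upgrades the bound to $b\ge 16$.
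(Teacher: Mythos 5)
Your proof is correct and follows essentially the same route as the paper: the authors likewise note that the codimension-$6$ Gorenstein betti sequence has the form $\{1,a,b,2b-2a+2,b,a,1\}$, that the total sum collapses to $4+4b$, and that the sharpened Total Rank Theorem then gives $4+4b>64$, hence $\beta_2=b\ge 16$, with no use of purity. You have simply spelled out explicitly the symmetry and alternating-sum bookkeeping that the paper leaves implicit when it says the previous proof already gives this corollary.
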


\begin{Remark}
We found these calculations rather miraculous in how quickly they settled this case.  However, the brevity of this argument belies how subtle, too, this approach can be.   For instance, our search excluded the pair $(a,b)=(10,11)$ because this would violate the Syzygy Theorem.  Nonetheless, however, this actually {\bf does} occur as a $B$-sequence: 
$$B(\{0,3,4,10,11,14\}) = \{1,10,11,11,10,1\}.$$
Sure enough, when one decomposes the corresponding ideal, one gets a component 
$$\texttt{ideal}{} (5y - 2z, 10x - 3z)$$
from which we can recover the degree sequence, with $(x,y,z)=(3,4,10)$.
\end{Remark}

\noindent {\bf How far can we go?} Since Main Conjecture 2 is ``finite'' in the sense that there are only finitely many betti sequences to check for a given $c$, one might hope that this technique will continue for $c = 7$ and beyond.  What happens? Starting in $c=7$ there will be new parameters in both the betti sequence and also the degree sequence.  The primary decomposition step then becomes much more difficult.  

Further, at some point we know that this process {\bf cannot} work, because there will be self-dual degree sequences $D$ with $B(D)$ that violates Main Conjecture 2.  The smallest example we found is the one in Question \ref{quest:gor}.  It is in codimension $15$ and has 
$$D = \{0,2,3,4,5,6,7,9,11,13,14,15,16,17,18,20\},$$ $$B = \{1,85,630,2295,4998,6630,4590,2210,2210,4590,6630,4998,2295,630,85,1\}.$$

\section*{Acknowledgements}

We thank Daniel Erman, Karthik Ganapathy, Elo\'isa Grifo, Srikanth Iyengar, Claudiu Raicu and Mark Walker for helpful conversations. Support for this research was provided by an AMS-Simons Research Enhancement Grant for Primarily Undergraduate Institution Faculty. We are grateful to the University of San Diego, including the support from the Office of Undergraduate Research, and the Fletcher Jones Endowment for Applied Mathematics.
\bibliographystyle{plain} 
\bibliography{References}

@article{BoocherW,
  title={Large lower bounds for the betti numbers of graded modules with low regularity},
  author={Boocher, Adam and Wigglesworth, Derrick},
  journal={Collectanea Mathematica},
  volume={72},
  number={2},
  pages={393--410},
  year={2021},
  publisher={Springer}
}

@article {Kunz,
    AUTHOR = {Kunz, Ernst},
     TITLE = {Almost complete intersections are not {G}orenstein rings},
   JOURNAL = {J. Algebra},
  FJOURNAL = {Journal of Algebra},
    VOLUME = {28},
      YEAR = {1974},
     PAGES = {111--115},
      ISSN = {0021-8693},
   MRCLASS = {13H10},
  MRNUMBER = {330158},
MRREVIEWER = {H. S. Butts},
       DOI = {10.1016/0021-8693(74)90025-8},
       URL = {https://doi.org/10.1016/0021-8693(74)90025-8},
}

@article {LuchoObstructions,
    AUTHOR = {Avramov, Luchezar L.},
     TITLE = {Obstructions to the existence of multiplicative structures on
              minimal free resolutions},
   JOURNAL = {Amer. J. Math.},
  FJOURNAL = {American Journal of Mathematics},
    VOLUME = {103},
      YEAR = {1981},
    NUMBER = {1},
     PAGES = {1--31},
      ISSN = {0002-9327},
   MRCLASS = {13D99 (13C05)},
  MRNUMBER = {601460},
MRREVIEWER = {Hamet Seydi},
       DOI = {10.2307/2374187},
       URL = {https://doi.org/10.2307/2374187},
}

@inproceedings{BERemarks,
  title={Remarks on ideals and resolutions},
  author={Buchsbaum, D and Eisenbud, David},
  booktitle={Symp. Math},
  volume={11},
  pages={193--204},
  year={1973}
}

@article{B,
    Author = {Adam {Boocher}},
    Title = {{Free resolutions and sparse determinantal ideals.}},
    FJournal = {{Mathematical Research Letters}},
    Journal = {{Math. Res. Lett.}},
    ISSN = {1073-2780; 1945-001X/e},
    Volume = {19},
    Number = {4},
    Pages = {805--821},
    Year = {2012},
    Publisher = {International Press of Boston, Somerville, MA},
    Language = {English},
    DOI = {10.4310/MRL.2012.v19.n4.a6},
    MSC2010 = {14M12 13D02},
    Zbl = {1273.14097}
}

@article {BE,
    AUTHOR = {Buchsbaum, David A. and Eisenbud, David},
     TITLE = {Algebra structures for finite free resolutions, and some
              structure theorems for ideals of codimension {$3$}},
   JOURNAL = {Amer. J. Math.},
  FJOURNAL = {American Journal of Mathematics},
    VOLUME = {99},
      YEAR = {1977},
    NUMBER = {3},
     PAGES = {447--485},
      ISSN = {0002-9327},
   MRCLASS = {13D15},
  MRNUMBER = {0453723},
MRREVIEWER = {M. Nagata},
       DOI = {10.2307/2373926},
       URL = {http://dx.doi.org/10.2307/2373926},
}

@article{Chara,
  title={Betti numbers of multigraded modules},
  author={Charalambous, Hara},
  journal={Journal of Algebra},
  volume={137},
  number={2},
  pages={491--500},
  year={1991},
  publisher={Elsevier}
}

@article {CE,
    AUTHOR = {Charalambous, Hara and Evans, E. Graham},
     TITLE = {A deformation theory approach to {B}etti numbers of finite
              length modules},
   JOURNAL = {J. Algebra},
  FJOURNAL = {Journal of Algebra},
    VOLUME = {143},
      YEAR = {1991},
    NUMBER = {1},
     PAGES = {246--251},
      ISSN = {0021-8693},
   MRCLASS = {13D10},
  MRNUMBER = {1128658},
MRREVIEWER = {Ralf Fr\~A\P berg},
       DOI = {10.1016/0021-8693(91)90263-8},
       URL = {http://dx.doi.org/10.1016/0021-8693(91)90263-8},
}

@article {CEM,
    AUTHOR = {Charalambous, Hara and Evans, E. Graham and Miller, Matthew},
     TITLE = {Betti numbers for modules of finite length},
   JOURNAL = {Proc. Amer. Math. Soc.},
  FJOURNAL = {Proceedings of the American Mathematical Society},
    VOLUME = {109},
      YEAR = {1990},
    NUMBER = {1},
     PAGES = {63--70},
      ISSN = {0002-9939},
     CODEN = {PAMYAR},
   MRCLASS = {13H10 (13H15)},
  MRNUMBER = {1013967},
MRREVIEWER = {P. Schenzel},
       DOI = {10.2307/2048363},
       URL = {http://dx.doi.org/10.2307/2048363},
}

@article {Erman,
    AUTHOR = {Erman, Daniel},
     TITLE = {A special case of the {B}uchsbaum-{E}isenbud-{H}orrocks rank
              conjecture},
   JOURNAL = {Math. Res. Lett.},
  FJOURNAL = {Mathematical Research Letters},
    VOLUME = {17},
      YEAR = {2010},
    NUMBER = {6},
     PAGES = {1079--1089},
      ISSN = {1073-2780},
   MRCLASS = {13D02},
  MRNUMBER = {2729632},
MRREVIEWER = {Viviana Ene},
       DOI = {10.4310/MRL.2010.v17.n6.a7},
       URL = {http://dx.doi.org/10.4310/MRL.2010.v17.n6.a7},
}

@article{ErmanSemigroup,
  title={The semigroup of Betti diagrams},
  author={Erman, Daniel},
  journal={Algebra \& Number Theory},
  volume={3},
  number={3},
  pages={341--365},
  year={2009},
  publisher={Mathematical Sciences Publishers}
}

@article {EG,
    AUTHOR = {Evans, E. Graham and Griffith, Phillip},
     TITLE = {The syzygy problem},
   JOURNAL = {Ann. of Math. (2)},
  FJOURNAL = {Annals of Mathematics. Second Series},
    VOLUME = {114},
      YEAR = {1981},
    NUMBER = {2},
     PAGES = {323--333},
      ISSN = {0003-486X},
     CODEN = {ANMAAH},
   MRCLASS = {13C10 (14F05)},
  MRNUMBER = {632842},
MRREVIEWER = {Christel Rotthaus},
       DOI = {10.2307/1971296},
       URL = {http://dx.doi.org/10.2307/1971296},
}

@article {HartshorneProblems,
    AUTHOR = {Hartshorne, Robin},
     TITLE = {Algebraic vector bundles on projective spaces: a problem list},
   JOURNAL = {Topology},
  FJOURNAL = {Topology. An International Journal of Mathematics},
    VOLUME = {18},
      YEAR = {1979},
    NUMBER = {2},
     PAGES = {117--128},
      ISSN = {0040-9383},
   MRCLASS = {14F05 (14D20 32G13 32L10)},
  MRNUMBER = {544153},
MRREVIEWER = {M. Schneider},
       DOI = {10.1016/0040-9383(79)90030-2},
       URL = {http://dx.doi.org/10.1016/0040-9383(79)90030-2},
}

@article {M, 
    AUTHOR = {Caitlin McAuley},
    TITLE = {On the minimal free resolution of a monomial ideal},
    YEAR = {2012},
     URL ={http://people.bath.ac.uk/ac886/students/caitlinMcAuley.pdf},
}

@article{W,
  title={Total Betti numbers of modules of finite projective dimension},
  author={Walker, Mark E},
  journal={Annals of Mathematics},
  pages={641--646},
  year={2017},
  publisher={JSTOR}
}

@article{BS,
  title={Lower bounds for Betti numbers of monomial ideals},
  author={Boocher, Adam and Seiner, James},
  journal={Journal of Algebra},
  volume={508},
  pages={445--460},
  year={2018},
  publisher={Elsevier}
}

@article{HK,
  title={On the Bettinumbers of finite pure and linear resolutions},
  author={Herzog, J{\"u}rgen and K{\"u}hl, Michael},
  journal={Communications in Algebra},
  volume={12},
  number={13},
  pages={1627--1646},
  year={1984},
  publisher={Taylor \& Francis}
}

@article{ES,
  title={Betti numbers of graded modules and cohomology of vector bundles},
  author={Eisenbud, David and Schreyer, Frank-Olaf},
  journal={Journal of the American Mathematical Society},
  volume={22},
  number={3},
  pages={859--888},
  year={2009}
}

@inproceedings{EFW,
  title={The existence of equivariant pure free resolutions (Existence de r{\'e}solutions pures et libres equivariantes)},
  author={Eisenbud, David and Fl{\o}ystad, Gunnar and Weyman, Jerzy},
  booktitle={Annales de l'institut Fourier},
  volume={61},
  pages={905--926},
  year={2011}
}

@article{Mastroeni,
title = {Koszul almost complete intersections},
journal = {Journal of Algebra},
volume = {501},
pages = {285-302},
year = {2018},
issn = {0021-8693},
doi = {https://doi.org/10.1016/j.jalgebra.2017.12.020},
url = {https://www.sciencedirect.com/science/article/pii/S0021869317306786},
author = {Matthew Mastroeni},
keywords = {Betti numbers, Koszul algebras, Almost complete intersections},
abstract = {Let R=S/I be a quotient of a standard graded polynomial ring S by an ideal I generated by quadrics. If R is Koszul, a question of Avramov, Conca, and Iyengar asks whether the Betti numbers of R over S can be bounded above by binomial coefficients on the minimal number of generators of I. Motivated by previous results for Koszul algebras defined by three quadrics, we give a complete classification of the structure of Koszul almost complete intersections and, in the process, give an affirmative answer to the above question for all such rings.}
}

@incollection {BoocherGrifo,
    AUTHOR = {Boocher, Adam and Grifo, Elo\'isa},
     TITLE = {Lower bounds on {B}etti numbers},
 BOOKTITLE = {Commutative algebra},
     PAGES = {77--111},
 PUBLISHER = {Springer, Cham},
      YEAR = {[2021] \copyright 2021},
      ISBN = {978-3-030-89693-5; 978-3-030-89694-2},
   MRCLASS = {13D02},
  MRNUMBER = {4394405},
       DOI = {10.1007/978-3-030-89694-2\_2},
       URL = {https://doi.org/10.1007/978-3-030-89694-2_2},
}

@article {VandeW,
    AUTHOR = {VandeBogert, Keller and Walker, Mark E.},
     TITLE = {The total rank conjecture in characteristic 2},
   JOURNAL = {Duke Math. J.},
  FJOURNAL = {Duke Mathematical Journal},
    VOLUME = {174},
      YEAR = {2025},
    NUMBER = {2},
     PAGES = {287--312},
      ISSN = {0012-7094,1547-7398},
   MRCLASS = {13D02},
  MRNUMBER = {4871721},
       DOI = {10.1215/00127094-2024-0027},
       URL = {https://doi.org/10.1215/00127094-2024-0027},
}
\end{document}